\numberwithin{equation}{section}
\newtheorem{thm}{Theorem}[section]
\newtheorem{lemma}[thm]{Lemma}
\newtheorem{cor}[thm]{Corollary}
\theoremstyle{definition}
\theoremstyle{remark}
\newtheorem{remark}[thm]{Remark}
\theoremstyle{rem}
\newcommand\Q{{\mathbb Q}}
\newcommand\Z{{\mathbb{Z}}}
\newcommand\PP{{\mathbb{P}}}
\newcommand\N{{\mathbb{N}}}
\newcommand\bq{\begin{equation}}
\newcommand\eq{\end{equation}}
\newcommand\beq{\begin{eqnarray*}}
\newcommand\eeq{\end{eqnarray*}}
\newcommand\ben{\begin{enumerate}}
\newcommand\een{\end{enumerate}}
\newcommand\bit{\begin{itemize}}
\newcommand\eit{\end{itemize}}
\newcommand\des{{\rm des}}
\newcommand\cdes{{\rm cdes}}
\newcommand\asc{{\rm asc}}
\newcommand\exc{{\rm exc}}
\newcommand\maj{{\rm maj}}
\newcommand\sg{{\mathfrak S}}
\newcommand\Des{{\rm DES}}
\newcommand\Asc{{\rm ASC}}
\newcommand\x{{\mathbf x}}
\newcommand\wt{{\rm wt}}
\newcommand\sps{{\bf \rm  ps}}
\begin{document}

\title[Smirnov word enumerators]
{On enumerators of Smirnov words by descents and cyclic descents}
\author[Ellzey]{Brittney Ellzey}
\address{Department of Mathematics, University of Miami, Coral Gables, FL 33124}
\email{ellzey@math.miami.edu}

\author[Wachs]{Michelle L. Wachs$^2$}
\address{Department of Mathematics, University of Miami, Coral Gables, FL 33124}
\email{wachs@math.miami.edu}
\thanks{$^{2}$Supported in part by NSF Grant
DMS  1502606}

\subjclass[2010]{05E05, 05A05, 05A15, 05A30}

\date{Nov. 29, 2018; revised April 26, 2019}

\dedicatory{Dedicated to the memory of Jeff Remmel}

\begin{abstract} A Smirnov word is a word over the positive integers in which adjacent letters must be different. 
A symmetric function enumerating  these words by descent number  arose in  the work of Shareshian and the second named author on $q$-Eulerian polynomials,  where a $t$-analog of a formula of Carlitz, Scoville, and Vaughan for enumerating  Smirnov words is proved.  A symmetric function enumerating a circular version of these words by cyclic descent number  arose in  the work of the first named author on chromatic quasisymmetric functions of directed graphs, where a $t$-analog of a formula of Stanley for enumerating circular Smirnov words is proved.  

In this paper we  obtain new $t$-analogs of the  Carlitz-Scoville-Vaughan formula and the Stanley formula in which the roles of   descent number and cyclic descent number are switched.   These formulas show that the  Smirnov word enumerators are  polynomials in $t$ whose coefficients are e-positive symmetric functions.  We also obtain expansions   in the power sum basis and the fundamental quasisymmetric function basis, complementing earlier results of Shareshian and the authors.     

Our work relies on studying refinements of the Smirnov word enumerators that count certain restricted classes of Smirnov words by descent number.   Applications to  variations of $q$-Eulerian polynomials and  to the chromatic quasisymmetric functions introduced by Shareshian and the second named author  are also presented.
\end{abstract}
\maketitle

\vbox{
\tableofcontents
}
\section{Introduction}
We consider words $w=w_1w_2 \dots w_n$ over the alphabet of positive integers $\PP$ with no adjacent repeated letters; that is $w_i \ne w_{i+1}$ for all $i \in [n-1] :=\{1,\dots,n-1\}$.  We refer to these words as  {\em Smirnov words} as is often done in the literature; see e.g.  \cite{GJ, FS, lsw, a1,  lr2, FHP, m}.

For $n \ge 1$, let $W_n$  be the set of Smirnov words of length $n$.
Now define the Smirnov word enumerator $$W_n(\x):= \sum_{w \in W_n} x_w,$$ where
$\x:=x_1,x_2,\dots$ is a sequence of indeterminates and $x_w := x_{w_1}x_{w_2}\cdots x_{w_n}$.  Clearly $W_n(\x)$ is a symmetric function.
Carlitz, Scoville, and Vaughan \cite[equation (7.12)]{csv}  derived the generating function formula 
\begin{equation}\label{goodcarleq} \sum_{n \ge 1} W_n({\bf x}) z^n =\frac{ \sum_{i \ge 1}i \,\, e_i(\x) z^i } {1 - \sum_{i \ge 2} (i-1) e_i(\x) z^i}\,,\end{equation} 
where $e_i(\x)$ is the elementary symmetric function of degree $i$.  An important consequence of this formula is that 
 $W_n(\x)$ is $e$-positive, which means that when expanded in the elementary symmetric function basis for the ring of symmetric functions, the coefficients are nonnegative.

The symmetric function $W_n(\x)$ was also considered by Stanley \cite{st4} in the context of chromatic symmetric functions and by Dollhopf, Goulden, and Greene  \cite{dgg} in the context of pair avoiding word enumerators.  Stanley also considered  a circular version of Smirnov words, that is Smirnov words whose first and last letter are different.  Let $$W^{\ne}_n(\x):= \sum_{\scriptsize \begin{array}{c} w \in W_n \\ w_1 \ne w_n \end{array}} x_w .$$ Stanley  \cite[Proposition 5.4]{st4} 
proved
\begin{equation}\label{circeq} \sum_{n \ge 1} W^{\ne}_n(\x) z^n =  \frac{\sum_{i \ge 1} i(i-1) \,\,e_i(\x) z^i } {1 - \sum_{i \ge 2} (i-1) e_i(\x) z^i}.\end{equation}
It follows from this formula that $W^{\ne}_n(\x)$ is  $e$-positive.

Given any word $w \in \PP^n$, where $n \ge 1$, the {\em descent number} of $w$ is defined by $$\des(w) := |\{i \in [n-1] : w_i > w_{i+1} \}|$$ and the {\em cyclic descent number} is defined by
\bq \label{cirdesdef} \cdes(w) := |\{ i \in [n] : w_i > w_{i+1}\}|,\eq where $w_{n+1} := w_1$.  Now define the refined Smirnov word  enumerators
\begin{eqnarray*} W_n(\x,t) &:=& \sum_{w \in W_n} t ^{\des(w)} x_w \\
\tilde W_n(\x,t) &:=& \sum_{w \in W_n} t ^{\cdes(w)} x_w \\
W^{\ne}_n(\x,t) &:=& \! \! \! \! \! \sum_{\scriptsize \begin{array}{c} w \in W_n \\ w_1 \ne w_n \end{array}} \! \! \! \! t^{\des(w)} x_w \\
\tilde W^{\ne}_n(\x,t) &:=& \! \! \! \! \! \sum_{\scriptsize \begin{array}{c} w \in W_n \\ w_1 \ne w_n \end{array}} \! \! \! \! t^{\cdes(w)} x_w.
\end{eqnarray*} 
The first and fourth of these Smirnov word enumerators have been studied before.  The main objective of this paper is to study  the other two Smirnov word enumerators. We start with a brief review of what is known for $W_n(\x,t)$ and $\tilde W_n^{\ne}(\x,t)$.

\subsection{Summary of known results} The refined Smirnov word enumerator 
$W_n(\x,t) $
 arose in the work  of Shareshian and the second named author on $q$-Eulerian polynomials \cite{sw1}.  
  Stanley (personal communication) observed 
 that the $r=1$ case of \cite[Theorem~1.2]{sw1}  is equivalent to  the following $t$-analog of (\ref{goodcarleq}),
 \bq \label{goodtcarleq} \sum_{n \ge 1} W_n({\bf x},t) z^n = \frac{ \sum_{i \ge 1} [i]_t \,\, e_i(\x) z^i } {1 - \sum_{i \ge 2} t[i-1]_t \,\,e_i(\x) z^i} ,
\eq
 where  $$[n]_t := 1+t + \dots + t^{n-1} = \frac{t^n-1}{t-1}.$$  Indeed, this follows from
 \cite[Equation (7.7) and Theorem 3.6]{sw1}; see  \cite[Section~4.1]{sw6}.
  (For another proof of (\ref{goodtcarleq}), see Remark~\ref{newproof}.)  It follows from (\ref{goodtcarleq}) that $W_n(\x,t)$ is $e$-positive, that is, $W_n(\x,t)$ is a polynomial in $t$ whose  coefficients are $e$-positive symmetric functions.

 Equation (\ref{goodtcarleq}) can be restated as
 \bq \label{goodtcarleq2} 1+ \sum_{n \ge 1} W_n({\bf x},t) z^n = \frac{ (1-t)E(z) }{E(tz) - tE(z)}, \eq
 where 
 $$E(z) := \sum_{n\ge 0} e_n(\x) z^n .$$
 The expression on the right hand side of (\ref{goodtcarleq2}) (or its image under the involution $\omega$ that takes $e_n(\x)$ to the complete homogeneous symmetric function $h_n(\x)$) has arisen in various other contexts.  
Stanley obtained the expression when considering the representation of the symmetric group on the cohomology of the toric variety associated with the dual permutohedron \cite{st2}.  A conjecture of  Shareshian and the second named author \cite{sw5, sw4, sw6}, proved by Brosnan and Chow \cite{BC} and subsequently by Guay-Paquet\cite{gp2},   generalizes the connection between the Smirnov word  enumerator  and the toric varieties to a connection between chromatic quasisymmetric functions and Hessenberg varieties.   The expression also arose in the work of Shareshian and the second named author on  the representation of the  symmetric group on homology of Rees products of posets \cite{sw2}.
 
 By combining (\ref{goodtcarleq2}) with a result of Stembridge \cite{stem1}, one obtains an expansion of $\omega W_n(\x,t) $ in the power sum  
 symmetric functions $p_\lambda(\x)$.  The expansion is given by
\begin{equation} \label{powereq} \omega W_n(\x,t) = \sum_{\lambda \vdash n} \left (A_{l(\lambda)}(t) \prod_{i=1}^{l(\lambda)} [\lambda_i]_t \right )\frac{p_\lambda(\x)}{z_\lambda} ,\end{equation} 
where $A_m(t)$ is the $m$th Eulerian polynomial, $z_\lambda$ is a constant associated with the partition $\lambda=(\lambda_1 \ge \lambda _2 \ge \dots \ge \lambda_{l(\lambda)})$, and $l(\lambda)$ is the length of $\lambda$. 

Recall that the Eulerian polynomials $A_n(t)$ have two well-known combinatorial interpretations, which are given by
\bq \label{combeuldef} A_n(t) = \sum_{\sigma \in \sg_n} t^{\des(\sigma)} = \sum_{\sigma \in \sg_n} t^{\exc(\sigma)},\eq
where $\sg_n$ is the symmetric group on $[n]$, and $\des$ and $\exc$ are MacMahon's classical equidistributed permutation statistics, descent number and excedance number, respectively. 

In \cite{sw,sw1} Shareshian and the second named author introduce a $q$-analog of the $\exc$ interpretation of $A_n(t)$  and in \cite{sw4} they introduce a $q$-analog of the $\des$ interpretation of $A_n(t)$.
 These $q$-analogs are shown to be equal in \cite[Theorem 9.7]{sw4}.  They  are defined by
$$A_n(q,t):= \sum_{\sigma \in \sg_n} q^{\maj(\sigma) - \exc(\sigma)} t^{\exc(\sigma)} = 
\sum_{\sigma \in \sg_n} q^{\maj_{\ge 2}(\sigma^{-1})} t^{\des(\sigma)},$$
 where $\maj$ is MacMahon's classical major index and $\maj_{\ge 2}$ is a permutation statistic whose definition is given in Section~\ref{qeulersec}.  By taking the stable principal specialization of  both sides of (\ref{goodtcarleq2}), the following $q$-analog of  Euler's classical formula is established in  \cite{sw1} for the $\exc$ interpretation of $A_n(q,t)$  and in \cite{sw4} for the $\des$ interpretation:
  \bq \label{qeuleq} 1+ \sum_{n\ge 1} A_n(q,t) \frac{z^n}{[n]_q!} = 
 \frac{(1-t) \exp_q(z)} {\exp_q(tz) - t \exp_q(z)} ,\eq
 where $$[n]_q ! := [n]_q [n-1]_q \dots [1]_q \quad  \mbox{ and } \quad \exp_q(z):= \sum_{n\ge 0} \frac{z^n}{[n]_q!}.$$
 In \cite{ssw}, Sagan, Shareshian and the second named author use the expansion (\ref{powereq}) of $\omega W_n(\x,t)$ in the power sum basis  to show that $A_n(q,t)$ evaluated at  any $n$th root of unity is a polynomial in $t$ with positive integer coefficients.
  For results on cycle-type refinements of the $\exc$ interpretation of $A_n(q,t)$ see \cite{sw4,HW,ssw}.

 The Smirnov word enumerator 
$\tilde W^{\ne}_n(\x,t) $
 arose in the work  \cite{e2,e1,e3}  of the first named author on chromatic quasisymmetric functions of directed graphs. 
The first named author  proves the  $t$-analog of (\ref{circeq}),
\begin{equation}\label{tcirceq} \sum_{n \ge 1} \tilde W^{\ne}_n(\x,t) z^n =  \frac{\sum_{i \ge 2} it[i-1] _t \,\,e_i(\x) z^i } {1 - \sum_{i \ge 2} t[i-1]_t e_i(\x) z^i} \end{equation}
from which $e$-positivity of $ \tilde W^{\ne}_n(\x,t) $ follows.  (A subsequent alternative proof of (\ref{tcirceq}) was given in \cite{AP}.)  As a consequence of a  general result  obtained in \cite{e2,e1,e3} on power sum expansions of chromatic quasisymmetric functions, the first named author also obtains the following expansion  analogous to (\ref{powereq}):

\bq \label{powercycneeq} \omega \tilde W^{\ne}_n(\x,t) = \sum_{\scriptsize \begin{array}{c} \lambda \vdash n \\ l(\lambda) >1\end{array}} \left (nt A_{l(\lambda)-1}(t) \prod_{i=1}^{l(\lambda)} [\lambda_i]_t \right )\frac{p_\lambda}{z_\lambda} \,\, + \,\, nt[n-1]_t \frac{ p_n}{n}.\eq

\subsection{New results} In this paper we obtain results for $\tilde W_n(\x,t)$ and $W^{\ne}_n(\x,t) $, analogous to those described above.  For instance, we prove the $t$-analog of (\ref{goodcarleq}),
\bq  \label{circfulleq}\sum_{n\ge 1} \tilde W_n(\x,t) z^n = \frac{\sum_{i\ge 1}  i t^{i-1} e_i(\x) z^i} {{1 - \sum_{i \ge 2} t[i-1]_t \,\,e_i(\x) z^i}}\, , \eq
and the $t$-analog of (\ref{circeq}),
\bq  \label{noteqeq}\sum_{n\ge 1}  W^{\ne}_n(\x,t) z^n = \frac{\sum_{i\ge 2} ([i]_t +it[i-2]_t) \, e_i(\x) z^i} {{1 - \sum_{i \ge 2} t[i-1]_t \,\,e_i(\x) z^i}} .\eq
From this it follows that $\tilde W_n(\x,t)$ and $W_n^{\ne}(\x,t)$ are symmetric in $\x$ and $e$-positive.
We also obtain  expansions  in the power sum symmetric functions analogous to (\ref{powereq}) and (\ref{powercycneeq}).

Equation (\ref{circfulleq}) can be restated as
 \bq \label{circfulleq2} \sum_{n \ge 1} \tilde W_n({\bf x},t) z^n = \frac{( 1-t)\frac{\partial} {\partial t} E(tz)  }{E(tz) - tE(z)}. \eq
 By specializing   (\ref{circfulleq2}), using an expansion of $\tilde W_n(\x)$ in the fundamental quasisymmetric functions, we obtain the cyclic analog of (\ref{qeuleq}),
 \bq \label {expcirceq} \sum_{n\ge 1} \left (\sum_{\sigma \in \sg_n }  q^{\maj_{\ge 2}(\sigma^{-1})} t^{\cdes(\sigma)} \right) \frac{z^n}{[n]_q!}= \frac{(1-t)\frac{\partial }{\partial t}\exp_q(tz)}{\exp_q(tz) -t\exp_q(z)} .\eq

 Our work relies on studying restricted Smirnov word enumerators that are components of all the Smirnov word enumerators discussed above.  For $n \ge 1$, let
\begin{eqnarray*} W_n^<(\x,t) &:=& \sum_{\scriptsize \begin{array}{c} w \in W_n \\ w_1 < w_n  \end{array}}  t^{\des(w)} x_w\\ 
W_n^>(\x,t) &:=& \sum_{\scriptsize \begin{array}{c} w \in W_n \\ w_1 > w_n  \end{array}} t^{\des(w)} x_w \\
W_n^=(\x,t) &:=& \sum_{\scriptsize \begin{array}{c} w \in W_n \\ w_1 = w_n  \end{array}}  t^{\des(w)} x_w.
\end{eqnarray*}
Clearly 
\begin{eqnarray} \label{refineeq1} W_n(\x,t) &=& W^<_n(\x,t) + W^>_n(\x,t) + W^=_n(\x,t)
\\ \label{refineeq2} \tilde W_n(\x,t) &=& t W^<_n(\x,t) + W^>_n(\x,t) + W^=_n(\x,t) 
\\ \label{refineeq3} W^{\ne}_n(\x,t) &=& W^<_n(\x,t) + W^>_n(\x,t)  \\
 \label{refineeq4}\tilde W^{\ne}_n(\x,t) &=& t W^<_n(\x,t) + W^>_n(\x,t) .\end{eqnarray}

It is an exercise in \cite[Exercise 2.9.11]{GR} that $W^<_n(\x,t)$, $W^>_n(\x,t)$, and  $ W^=_n(\x,t)$ are symmetric in $\x$.  Here we derive results for $W^<_n(\x,t)$, $W^>_n(\x,t)$, and  $ W^=_n(\x,t)$ analogous to those of  $W_n(\x,t)$, which not only establish symmetry, but also $e$-positivity of $W^<_n(\x,t)$ and $W^{>}(\x,t)$.  When appropriately combined they yield the above mentioned results for $\tilde W_n(\x,t)$ and $ W_n^{\ne}(\x,t)$.
They also enable us to recover the previous results for  $W_n(\x,t)$ and $ \tilde W_n^{\ne}(\x,t)$.   However they  do not provide new proofs of  the previous results   since their proofs  rely on these results.

The Smirnov word enumerator is an example of a chromatic quasisymmetric function.  (The chromatic quasisymmetric functions are  a refinement  of Stanley's chromatic symmetric functions, which were introduced by Shareshian and the second named author in \cite{sw5,sw4}). Indeed, $W_n(\x,t)$ is the chromatic quasisymmetric function of the naturally labeled path graph with $n$ nodes.  In this paper our results for $W^<_n(\x,t)$ and $W^>_n(\x,t)$ are used to obtain new results in the study of chromatic quasisymmetric functions.  For instance,  we use $e$-positivity of $W^<_n(\x,t)$ and $W^>_n(\x,t)$ to establish $e$-positivity of the chromatic quasisymmetric function of the labeled cycle $C_n$, providing an example of an $e$-positive chromatic quasisymmetric function not covered by the refinement of the Stanley-Stembridge $e$-positivity conjecture appearing in \cite{sw5,sw4} nor by the directed graph version appearing in \cite{e2,e1,e3}.

Smirnov words have been used in the literature  to enumerate unconstrained words; see e.g. \cite{FS,lr1,lr2,m}. 
In a forthcoming paper we use  results discussed in this paper to obtain analogous results for unconstrained words.

\section{Expansion in the elementary symmetric functions} \label{esec}

In this section we derive  formulas that refine (\ref{goodtcarleq}) and (\ref{tcirceq}) and then use the refinements to prove   (\ref{circfulleq}) and (\ref{noteqeq}).

\begin{thm}\label{mainth}  Let
 \begin{eqnarray} \label{Ddef}
 D(\x,t,z) &:=& 1 - \sum_{i \ge 2} t[i-1]_t e_i(\x) z^i , \\ 
\nonumber a_i(t) &:=& {d \over d t} [i]_t \hspace{.5in} =  \sum_{j=0}^{i-2} (j+1) t^j , \\
\nonumber b_i(t) &:=& t^{i -1}a_i(t^{-1}) \hspace{.1in}= \sum_{j=1}^{i-1} (i-j) t^j, \\
\nonumber c_i(t) &:=&  it[i-2]_t ,
 \end{eqnarray}
 for all $i\ge 2$.  Then
\begin{eqnarray} \label{lesseq} \qquad \sum_{n \ge 1} W^<_n({\bf x},t) z^n &=&  {1 \over D(\x,t,z)} \sum_{i \ge 2} a_i(t) \,\,e_i(\x) z^i \\ 
 \label{greateq}\sum_{n \ge 1} W^>_n({\bf x},t) z^n &=&  {1 \over D(\x,t,z)} \sum_{i \ge 2}b_i(t) \,\, e_i(\x) z^i 
 \\
 \label{equaleq} \sum_{n \ge 1} W^=_n({\bf x},t) z^n &= & {1 \over D(\x,t,z)} (e_1(\x) z -  \sum_{i \ge 2}  c_i(t)\,\, e_i(\x) z^i).
 \end{eqnarray}
 \end{thm}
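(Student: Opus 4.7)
The plan is to reduce the three identities in Theorem~\ref{mainth} to a single core identity and then prove that one directly. The reduction rests on two observations.

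First, the reversal involution $w = w_1 w_2 \cdots w_n \mapsto w^\star := w_n w_{n-1}\cdots w_1$ on $W_n$ preserves $x_w$, sends $\des(w)$ to $n - 1 - \des(w)$, and restricts to a bijection $W^<_n \leftrightarrow W^>_n$. This gives the identity $W^>_n(\x, t) = t^{n-1}\, W^<_n(\x, 1/t)$. Together with the short algebraic facts $b_i(t) = t^{i-1}\,a_i(1/t)$ and $D(\x, t, z) = D(\x, 1/t, tz)$, this shows that (\ref{greateq}) is equivalent to (\ref{lesseq}) under the substitution $(t, z) \mapsto (1/t, tz)$. Second, using (\ref{refineeq1}) together with the CSV $t$-analog (\ref{goodtcarleq}) and bringing everything over the common denominator $D(\x,t,z)$, identity (\ref{equaleq}) reduces to the polynomial identity $[i]_t - a_i(t) - b_i(t) = -c_i(t)$ for $i \ge 2$, plus the observation that the $i=1$ contribution of the numerator of (\ref{goodtcarleq}) is $e_1(\x)\,z$. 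Both sides of that polynomial identity simplify to $-i\,t\,[i-2]_t$, so the verification is routine.

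The core identity is therefore (\ref{lesseq}). Multiplying through by $D(\x, t, z)$ and extracting the coefficient of $z^n$, (\ref{lesseq}) is equivalent to the convolution recurrence
\[
 W^<_n(\x, t) = a_n(t)\,e_n(\x) + \sum_{i = 2}^{n-1} t\,[i-1]_t\, e_i(\x)\, W^<_{n-i}(\x, t) \qquad (n \ge 2),
\]
with $W^<_0 = W^<_1 = 0$. The plan is to establish this recurrence by adapting the Goulden--Jackson-style cluster expansion that Shareshian and Wachs \cite{sw1} used to prove (\ref{goodtcarleq}). One decomposes each $w \in W^<_n$ into its maximal ascending runs $u_1 u_2 \cdots u_{k+1}$, with $k = \des(w)$. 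The standard bad-descent cancellation at each interior interface $u_j \mid u_{j+1}$ produces the factor $t\,[\ell_j - 1]_t\, e_{\ell_j}(\x)$ that assembles into $1/D(\x,t,z)$, exactly as in the CSV argument. The boundary constraint $w_1 < w_n$ affects only the final run $u_{k+1}$: whereas in (\ref{goodtcarleq}) the final run contributes $[\ell]_t\, e_\ell(\x)$, here it should contribute $a_\ell(t)\, e_\ell(\x)$, with $a_\ell(t) = \sum_{j=0}^{\ell-2}(j+1)t^j$ tracking the ways to position $w_n$ strictly above $w_1$ within an $\ell$-element strictly increasing tuple, weighted by descents.

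The main obstacle is that the constraint $w_1 < w_n$ is non-local, whereas the Goulden--Jackson cancellation operates locally at each descent interface. My plan to handle this is to mark $w_1$ as a tracked letter propagating through the decomposition, interacting only with the placement of $w_n$ in the terminal run, so that the $\,w_1 < w_n$ condition can be applied locally inside $u_{k+1}$ after the interior clusters have been resolved. Carefully verifying that after the interior cancellations the residual contribution of the terminal run is precisely $a_\ell(t)\, e_\ell(\x)$ is where the main effort lies; once this bookkeeping is done, the recurrence drops out, (\ref{lesseq}) follows, and the other two formulas come for free via the two reductions above.
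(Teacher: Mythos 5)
Your reductions are sound: the reversal involution giving $W^>_n(\x,t)=t^{n-1}W^<_n(\x,1/t)$ together with $D(\x,t,z)=D(\x,1/t,tz)$ does make (\ref{greateq}) equivalent to (\ref{lesseq}) (this is exactly how the paper derives (\ref{greateq})), and once (\ref{lesseq}) and (\ref{greateq}) are known, (\ref{equaleq}) does follow from (\ref{refineeq1}), (\ref{goodtcarleq}), and the identity $[i]_t-a_i(t)-b_i(t)=-it[i-2]_t$. The problem is that everything then rests on a direct proof of (\ref{lesseq}), and that proof is not actually given: it is a plan whose hardest step you explicitly defer (``where the main effort lies''). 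The difficulty you name --- that $w_1<w_n$ is a non-local constraint while the Goulden--Jackson/CSV cancellation is local at each descent interface --- is precisely the obstruction, and the proposed fix (``mark $w_1$ as a tracked letter propagating through the decomposition'') is not specified in a checkable way. Concretely: the weight in the cluster expansion must factor over the runs for the sign-reversing cancellation to go through, but the indicator of $w_1<w_n$ couples the first run to the last and does not factor; your claim that the terminal run contributes $a_\ell(t)e_\ell(\x)$ by ``positioning $w_n$ above $w_1$ within an $\ell$-element tuple'' conflates $w_1$, which lives in the first run, with the letters of the last run; and the convolution recurrence, read combinatorially as stripping a cluster of size $i$, changes which letter plays the role of $w_1$ in the residual word, so membership of the residue in $W^<_{n-i}$ is not the inherited condition and no bijection or involution is supplied to repair this. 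As written, the core identity is unproven.

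For comparison, the paper takes a different and in some ways opposite route: it proves (\ref{equaleq}) \emph{first}, by the transfer-matrix method --- Smirnov words with $w_1=w_n$ are closed walks on the complete digraph on $[k]$ with weights $x_j$ or $tx_j$ according to ascent or descent, and the generating function is evaluated via Ellzey's determinant formula (\ref{elzeq}) before letting $k\to\infty$. It then obtains (\ref{lesseq}) purely algebraically by solving the $2\times 2$ linear system coming from $W^<_n+W^>_n=W_n-W^=_n$ and $tW^<_n+W^>_n=\tilde W^{\ne}_n$, whose right-hand sides are known from (\ref{goodtcarleq}), (\ref{equaleq}), and (\ref{tcirceq}). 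If you replace your unproven cluster-expansion step with either the transfer-matrix computation of (\ref{equaleq}) or the citation of (\ref{tcirceq}) as a second independent linear equation, your argument closes; if instead you want an independent combinatorial proof of (\ref{lesseq}) (which would be genuinely new, since the paper's proof is not independent of (\ref{goodtcarleq}) and (\ref{tcirceq})), you must first construct the cancellation that handles the $w_1<w_n$ coupling, and that construction is the entire content of the theorem.
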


 Before proving the theorem, we observe that
\begin{eqnarray*} a_i(t)+b_i (t)&=& 1+  (i+1) t + (i+1) t^2 + \dots + (i+1) t^{i-2} + t^{i-1}
\\ &=& [i]_t + i t[i-2]_t  .\end{eqnarray*}
Hence,  
$$a_i(t)+b_i(t)- c_i(t) = [i]_t,$$ which shows that Theorem~\ref{mainth} refines  (\ref{goodtcarleq}) since $W_n^<(\x,t) + W_n^>(\x,t) + W_n^=(\x,t)=W_n(\x,t)$.
Also $$ta_i(t)+b_i(t) = it[i-1]_t,$$ which shows that Theorem~\ref{mainth} also refines (\ref{tcirceq}) since $t W_n^<(\x,t)  + W_n^>(\x,t)  = \tilde W^{\ne}_n(\x,t)$.

 We prove (\ref{equaleq}) first.  Then we use  (\ref{equaleq}), (\ref{goodtcarleq}), and (\ref{tcirceq})   to derive  (\ref{lesseq}).  
 Equation (\ref{greateq}) follows from (\ref{lesseq}).  Our proof of 
 (\ref{equaleq}) uses the transfer-matrix method discussed in~\cite[Section 4.7]{st5} and borrows ingredients from the first named author's proof of (\ref{tcirceq}) in \cite{e1}.
 
 Before presenting the proof of (\ref{equaleq}), we give a brief review of the transfer matrix method. 
A {\em walk} of length $n$ on a directed graph $G=([k],E)$ is a sequence of vertices $v_0, v_1,\dots, v_n$ 
such that $(v_{i-1},v_{i}) \in E$ for all $i \in [n]$.  A walk is {\em closed} if $v_0 = v_n$.
We attach weights  in some commutative ring $R$ to the edges of $G$.  Let $\wt: E \to R$ be the 
weight function. Now  define the weight $\wt(w)$ of a walk $w:= v_0, v_1,\dots, v_n$ to be the 
product $\wt(v_0,v_1)\wt(v_1,v_2) \dots \wt(v_{n-1},v_n)$.  For each $i,j \in [k]$, define $\mathcal W_{i,j,n}$ to be the set of walks of length $n$ from $i$ 
to $j$ and let
$$U_{i,j,n} := \sum_{w \in \mathcal W_{i,j,n}} \wt(w).$$  

The  transfer-matrix method enables one to express the generating function $\sum_{n\ge 0} U_{i,j,n} z^n$ in terms of the adjacency matrix $A$ for the edge weighted digraph $G$. That is, $A$ is the $k\times k$ matrix whose $(i,j)$-entry is  
$$A_{i,j} := \begin{cases} \wt(i,j)  &\mbox{if }(i,j) \in E \\ 0 &\mbox{otherwise.} \end{cases}$$
Theorem~4.7.2 of \cite{st5} states that for all $i,j \in [k]$,

\begin{equation} \label{transmeq} \sum_{n\ge 0} U_{i,j,n} z^n = \frac{(-1)^{i+j} \det(I-zA:j,i)}{\det(I-zA)},\end{equation} where $(B:i,j)$ is the matrix obtained from $B$ by removing row $i$ and column $j$. 

\begin{proof}[Proof of (\ref{equaleq})] As in \cite{st4} and \cite{e1}, we view a Smirnov word $w_1w_2\dots w_n$ over the alphabet $[k]$  as a walk $w_1, w_2,\dots ,w_n$ of length $n-1$ on the digraph $G=([k], E)$, where $$E=\{(i,j) : i,j \in [k]  \mbox{ and } i \ne j\}.$$  We let the edge weights belong to the commutative ring  $\Z[x_1,\dots,x_k,t]$ and for all $(i,j) \in E$, and we set
$$\wt(i,j) := \begin{cases}  x_j &\mbox{if } i<j \\
     tx_j &\mbox{if } i>j . \end{cases}$$
     
Note that if $w$ is a Smirnov word  over the alphabet $[k]$ then 
$$t^{\des(w)} x_w = x_{w_1} \wt(w),$$ where $w_1$ is the first letter of $w$.  
Hence $$W^=_n(x_1,\dots,x_k,t):=W^=_n(x_1,\dots,x_k,0,0,\dots,t)= \sum_{i=1}^k x_i U_{i,i,n-1}.$$
It follows from this and (\ref{transmeq}) that
\begin{eqnarray} \nonumber \sum_{n\ge 1} W^=_n(x_1,\dots,x_k,t) z^n &=& z \sum_{i=1}^k x_i \sum_{n\ge 0} U_{i,i,n} z^n \\ \nonumber
&=&  z \sum_{i=1}^k x_i \frac{ \det(I-zA:i,i)}{\det(I-zA)}
\\ \label{deteq} &=& \frac{z \sum_{i=1}^k x_i \det(I-zA:i,i)}{\det(I-zA)},
\end{eqnarray}
where  
$$A= \bmatrix  0 & x_2 &x_3 & \dots & x_k \\ t x_1 & 0 & x_3&  \dots & x_k \\ tx_1 & tx_2 & 0 & \dots & x_k \\   \vdots & \vdots & \vdots & \ddots & \vdots 
\\ tx_1 & tx_2 & tx_3  & \dots &0\endbmatrix .$$

In \cite[Proof of Theorem~6.1]{e1} the first named author proves that\footnote{This is obtained from the formula in \cite{e1} by  replacing $t$ with $t^{-1}$ and each $x_i$ with $t x_i$}  
\bq \label{elzeq} \det(I-zA) = 1-\sum_{j\ge 2} e_j(x_1,\dots,x_k) \,t[j-1]_t z^j.\eq
It follows that $$\det(I-zA:i,i) = 1 - \sum_{j\ge 2} e_j(x_1,\dots,\hat{x_i}, \dots, x_k) \,t[j-1]_t z^j,$$
where $ \hat{x_i} $ denotes deletion of $x_i$. Multiplying both sides by $x_i$ and summing over all $i\in [k]$ yields,

\begin{equation*} \sum_{i=1}^k x_i \det(I-zA:i,i) = \sum_{i=1}^k x_i - \sum_{j\ge 2} \sum_{i=1}^k x_i e_j(x_1,\dots,\hat{x_i}, \dots, x_k) \,t[j-1]_t z^j.
\end{equation*}
One can see that $$\sum_{i=1}^k x_i e_j(x_1,\dots,\hat{x_i}, \dots, x_k) = (j+1)e_{j+1}(x_1,\dots,x_k),$$ since both sides enumerate $(j+1)$-subsets of $[k]$ with a distinguished element.
Hence
$$ \sum_{i=1}^k x_i \det(I-zA:i,i) = e_1(x_1,\dots,x_k) - \sum_{j \ge 2} (j+1) e_{j+1}(x_1,\dots,x_k)  t[j-1]_tz^j.$$
Upon multiplying both sides by $z$, we see that the numerator of the right hand side of (\ref{deteq}) is 
$$e_1(x_1,\dots,x_k) z - \sum_{j \ge 3} j\, e_{j}(x_1,\dots,x_k) t[j-2]_t  z^j.$$
It therefore follows from (\ref{deteq}) and (\ref{elzeq}) that 
$$\sum_{n\ge 1} W^=_n(x_1,\dots,x_k,t) z^n = \frac{e_1(x_1,\dots,x_k) z - \sum_{j \ge 3} j\, e_{j}(x_1,\dots,x_k) t[j-2]_t  z^j}{1-\sum_{j\ge 2} e_j(x_1,\dots,x_k) \,t[j-1]_t z^j}.$$
The desired result (\ref{equaleq}) follows by taking the limit as $k$ goes to infinity.
\end{proof}

\begin{proof}[Proof of  (\ref{lesseq})] 

 It follows from (\ref{refineeq1}),  (\ref{goodtcarleq}), and (\ref{equaleq}) that
\begin{eqnarray} \label{lessgreateq1} & & \\
\nonumber \sum_{n \ge 1} W_n^<(\x,t) z^n + \sum_{n \ge 1} W_n^>(\x,t) z^n &=& \sum_{n \ge 1} W_n(\x,t) - \sum_{n \ge 1} W^=_n(\x,t)   \\ \nonumber
& =& {B(\x,t,z) \over D(\x,t,z)} , \end{eqnarray}
where 
\begin{eqnarray*} B(\x,t,z) &=& \sum_{i \ge 1} [i]_t e_i(\x) z^i -
(e_1(\x) z - \sum_{i \ge 2}  i t[i-2]_t \,\, e_i(\x) z^i) \\
&=& \sum_{i \ge 2} [i]_t e_i(\x) z^i + 
 \sum_{i \ge 2}  i t[i-2]_t \,\, e_i(\x) z^i \\
&=& \sum_{i \ge 2} ([i]_t+i t[i-2]_t) e_i(\x) z^i \,.
\end{eqnarray*}
It follows from (\ref{refineeq2}) and (\ref{tcirceq}) that
\begin{eqnarray} \label{lessgreateq2} & & \\ \nonumber t \sum_{n \ge 1}  W_n^<(\x,t) z^n + \sum_{n \ge 1} W_n^>(\x,t) z^n &=& \sum_{n \ge 1} \tilde W^{\ne}_n(\x,t)  z^n \\ \nonumber
& =& {C(\x,t,z) \over D(\x,t,z)} , \end{eqnarray}
where 
$$C(\x,t,z) = \sum_{i \ge 2} it[i-1] _t \,\,e_i(\x) z^i  . 
$$

By subtracting (\ref{lessgreateq1}) from (\ref{lessgreateq2}), we obtain
\begin{eqnarray*} (t-1)  \sum_{n \ge 1} W_n^<(\x,t) z^n &=& {C(\x,t,z)- B(\x,t,z) \over D(\x,t,z)} 
\\ &=& {\sum_{i\ge 2} ( it^{i-1} - [i]_t ) e_i(\x) z^i \over D(\x,t,z)} 
.\end{eqnarray*}
Note that 
\begin{eqnarray} \nonumber(t-1)(1+2t +3t^2 + \dots + (i-1)t^{i-2} ) &=& (i-1)t^{i-1} - [i-1]_t \\
\label{tminu1eq} &=& it^{i-1} - [i]_t
. \end{eqnarray}
Hence
$$  \sum_{n \ge 1} W_n^<(\x,t) z^n = {\sum_{i \ge 2}(1+2t +3t^2 + \dots + (i-1)t^{i-2}) \, e_i(\x) z^i \over D(\x,t,z)} $$
as desired.
\end{proof}

\begin{proof}[Proof of  (\ref{greateq})] Using the involution on the set of Smirnov words that reverses each word, we see that
$W_n^>(\x,t) = t^{n-1} W_n^<(\x,t^{-1})$.  It follows from this and~(\ref{lesseq}) that 
\begin{eqnarray*}\sum_{n \ge 1} W_n^>(\x,t) z^n &=& t^{-1} \sum_{n \ge 1} W_n^< (\x,t^{-1}) (tz)^n \\
&=& t^{-1} \frac{\sum_{i \ge 2} a_i(t^{-1}) e_i(\x)t^i z^i }{D(\x,t^{-1},tz)}\\
&=& \frac{\sum_{i \ge 2} b_i(t) e_i(\x)z^i }{D(\x,t^{-1},tz)}.
\end{eqnarray*}  Since $D(\x,t^{-1},tz) = D(\x,t,z)$, the result holds.
\end{proof}

Since  \bq \label{Deq} {1\over D(\x,t,z)} = \sum_{m \ge 0} \left(\sum_{i \ge 2} t[i-1]_t e_i(\x) z^i\right)^m, \eq we have the following consequence of Theorem~\ref{mainth}.

 \begin{cor} \label{lesgrcor}For all $n \ge 1$, the polynomials $W^<_n(\x,t)$ and $W^>_n(\x,t)$ are $e$-positive.\end{cor}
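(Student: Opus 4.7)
The plan is to read $e$-positivity directly off the generating function identities (\ref{lesseq}) and (\ref{greateq}) of Theorem~\ref{mainth}, combined with the geometric series expansion (\ref{Deq}). The argument will amount to observing that every coefficient appearing in sight is a polynomial in $t$ with nonnegative integer coefficients, and that these combine in the obvious way under multiplication.

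First I would note that the polynomials $a_i(t) = \sum_{j=0}^{i-2}(j+1)t^j$ and $b_i(t) = \sum_{j=1}^{i-1}(i-j)t^j$ appearing in the numerators of (\ref{lesseq}) and (\ref{greateq}) lie in $\Z_{\ge 0}[t]$, and likewise so does each $t[i-1]_t = t + t^2 + \cdots + t^{i-1}$ appearing in $D(\x,t,z)$. Using (\ref{Deq}) to write
$$\frac{1}{D(\x,t,z)} = \sum_{m \ge 0} \Bigl(\sum_{i \ge 2} t[i-1]_t\,e_i(\x)\,z^i\Bigr)^m,$$
I can then expand the right-hand side as a formal power series in $z$ whose coefficient of $z^n$ is a finite sum $\sum_\lambda \gamma_\lambda(t)\,e_\lambda(\x)$, with $\lambda$ running over partitions of $n$ whose parts are all at least $2$ and with each $\gamma_\lambda(t) \in \Z_{\ge 0}[t]$.

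Next I would multiply this expansion of $1/D(\x,t,z)$ by the numerator $\sum_{i\ge 2} a_i(t)\,e_i(\x)\,z^i$ of (\ref{lesseq}) (respectively $\sum_{i\ge 2} b_i(t)\,e_i(\x)\,z^i$ of (\ref{greateq})) and extract the coefficient of $z^n$. By Theorem~\ref{mainth} this coefficient equals $W_n^<(\x,t)$ (resp.\ $W_n^>(\x,t)$), and by construction it takes the form $\sum_{\mu \vdash n} \delta_\mu(t)\,e_\mu(\x)$, where each $\delta_\mu(t)$ is a finite sum of products of polynomials lying in $\Z_{\ge 0}[t]$ and hence is itself in $\Z_{\ge 0}[t]$. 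Since all the substantive content is already packaged into Theorem~\ref{mainth}, there is no real obstacle here; the only item to verify is the routine bookkeeping that a product of polynomials with nonnegative integer coefficients in $t$ again has nonnegative integer coefficients.
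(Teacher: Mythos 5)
Your argument is correct and is exactly the paper's proof: the paper deduces Corollary~\ref{lesgrcor} from Theorem~\ref{mainth} by expanding $1/D(\x,t,z)$ via (\ref{Deq}) and observing that $a_i(t)$, $b_i(t)$, and $t[i-1]_t$ all have nonnegative coefficients. Your write-up just makes the routine bookkeeping explicit.
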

 
 Note that it follows from Theorem~\ref{mainth} that the  coefficient of $e_n(\x)$ in the $e$-basis expansion of $W^=_n(\x,t)$ is $-nt[n-2]_t$ if $n\ge 2$. Hence $W^=_n(\x,t)$ fails to be $e$-positive.  However, observe that the coefficient $c_\lambda(t)$ of  $e_\lambda(\x)$ is in $\N[t]$ if the smallest part of  $\lambda$ is $1$, and $- c_\lambda(t) \in \N[t]$ otherwise.

We obtain equivalent formulations of (\ref{lesseq}) and of (\ref{greateq})  by multiplying the numerators and denominators of the right hand sides of the equations by $1-t$.

\begin{cor} 
 We have
\begin{eqnarray}\label{lessWeeq} \sum_{n \ge 1} W_n^<(\x,t) z^n &=& \frac{(1-t)\frac{\partial}{\partial t}  \sum_{i\ge 2} [i]_t \, e_i(\x) z^i}{E(tz) - tE(z)}
 \\ \label{greatWeeq}
\sum_{n \ge 1} W_n^>(\x,t) z^n &=& \frac{(1-t) \sum_{i\ge 2} ( \sum_{j=1}^{i-1} (i-j) t^j)\, e_i(\x) z^i}{E(tz) - tE(z)}
\end{eqnarray} 
\end{cor}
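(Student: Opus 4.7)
The proof is essentially a direct computation: I would multiply the numerator and denominator on the right-hand sides of (\ref{lesseq}) and (\ref{greateq}) by $(1-t)$ and recognize the resulting expressions.

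The key observation, which I would prove first, is the identity
\[
(1-t)\, D(\x,t,z) \;=\; E(tz) - t E(z).
\]
To see this, note that $(1-t)\, t[i-1]_t = t(1-t)\frac{t^{i-1}-1}{t-1} = -t(t^{i-1}-1) = t - t^i$, so
\[
(1-t)\, D(\x,t,z) = (1-t) - \sum_{i \ge 2} (t - t^i)\, e_i(\x)\, z^i.
\]
On the other hand, expanding $E(tz) - tE(z) = \sum_{i \ge 0}(t^i - t)\, e_i(\x)\, z^i$ and noting that the $i=0$ term contributes $(1-t)$ and the $i=1$ term vanishes, we get the same expression. This handles the common denominator.

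For the numerator of (\ref{lessWeeq}), since $a_i(t) = \frac{d}{dt}[i]_t$ by definition, multiplying the numerator of (\ref{lesseq}) by $(1-t)$ immediately yields
\[
(1-t)\sum_{i \ge 2} a_i(t)\, e_i(\x)\, z^i \;=\; (1-t)\frac{\partial}{\partial t}\sum_{i \ge 2} [i]_t\, e_i(\x)\, z^i,
\]
which is the claimed numerator. Similarly, for (\ref{greatWeeq}), since by definition $b_i(t) = \sum_{j=1}^{i-1}(i-j)t^j$, multiplying the numerator of (\ref{greateq}) by $(1-t)$ gives exactly the claimed numerator.

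There is no real obstacle here; the result is a purely formal manipulation. The only step requiring a brief check is the identity $(1-t)D(\x,t,z) = E(tz) - tE(z)$, which is the same rewriting used implicitly in passing from (\ref{goodtcarleq}) to (\ref{goodtcarleq2}) earlier in the paper.
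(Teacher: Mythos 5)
Your proposal is correct and follows exactly the paper's argument: the paper derives this corollary precisely by multiplying numerator and denominator of (\ref{lesseq}) and (\ref{greateq}) by $1-t$, with the identity $(1-t)D(\x,t,z)=E(tz)-tE(z)$ (already implicit in passing from (\ref{goodtcarleq}) to (\ref{goodtcarleq2})) doing the work on the denominator. Your explicit verification of that identity and of the numerator rewrites via the definitions of $a_i(t)$ and $b_i(t)$ is accurate and complete.
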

\noindent where $E(z) := \sum_{n \ge 0} e_n(\x) z^n$.

We now use Theorem~\ref{mainth} to prove (\ref{noteqeq}) and (\ref{circfulleq}), which we restate here.

\begin{cor}   \label{noteqcor}We have,
\beq \sum_{n\ge 1} W^{\ne}_n(\x,t) z^n = \frac{ \sum_{i\ge 2}( [i]_t + it[i-2]_t) e_i(\x) z^i} {D(\x,t,z)}.\eeq
Consequently, $W^{\ne}_n(\x,t)$ is $e$-positive.
\end{cor}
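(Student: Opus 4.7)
The plan is to derive this corollary directly from Theorem~\ref{mainth} together with the decomposition identity (\ref{refineeq3}). There is essentially no new calculation required; the work was already done in stating Theorem~\ref{mainth} and in the short remark immediately following it.

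First, by (\ref{refineeq3}) we have
\[
\sum_{n\ge 1} W^{\ne}_n(\x,t)\, z^n \;=\; \sum_{n\ge 1} W^<_n(\x,t)\, z^n \;+\; \sum_{n\ge 1} W^>_n(\x,t)\, z^n.
\]
Next, I would apply equations (\ref{lesseq}) and (\ref{greateq}) of Theorem~\ref{mainth} to rewrite each piece with the common denominator $D(\x,t,z)$, obtaining
\[
\sum_{n\ge 1} W^{\ne}_n(\x,t)\, z^n \;=\; \frac{1}{D(\x,t,z)} \sum_{i\ge 2}\bigl(a_i(t) + b_i(t)\bigr)\, e_i(\x)\, z^i.
\]
Then I would invoke the identity $a_i(t) + b_i(t) = [i]_t + it[i-2]_t$, which is verified in the lines immediately after the statement of Theorem~\ref{mainth}. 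Substituting this expression yields the claimed generating function formula.

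For the $e$-positivity assertion, I would observe that $[i]_t + it[i-2]_t$ is a polynomial in $t$ with nonnegative integer coefficients. Expanding $\frac{1}{D(\x,t,z)}$ via the geometric series (\ref{Deq}) expresses the right-hand side as a nonnegative $\N[t]$-linear combination of products of elementary symmetric functions; extracting the coefficient of $z^n$ then gives an $e$-positive polynomial in $t$. Alternatively, $e$-positivity follows at once from Corollary~\ref{lesgrcor}, since $W^{\ne}_n(\x,t) = W^<_n(\x,t) + W^>_n(\x,t)$ is a sum of two $e$-positive symmetric functions.

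There is no serious obstacle here: the corollary is a formal consequence of Theorem~\ref{mainth} and the arithmetic identity $a_i(t) + b_i(t) - c_i(t) \cdot 0 = [i]_t + it[i-2]_t$ that was already noted. The only point worth stating carefully is the reduction to the identity (\ref{refineeq3}), to make sure the reader sees that combining the $<$ and $>$ pieces is exactly what yields $W^{\ne}_n(\x,t)$.
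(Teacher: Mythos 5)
Your proposal is correct and matches the paper's proof, which likewise combines $W^{\ne}_n(\x,t) = W^{<}_n(\x,t) + W^{>}_n(\x,t)$ with equations (\ref{lesseq}) and (\ref{greateq}) of Theorem~\ref{mainth} and the identity $a_i(t)+b_i(t) = [i]_t + it[i-2]_t$, with $e$-positivity following from the expansion (\ref{Deq}) exactly as you indicate. No issues.
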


\begin{proof} We use the facts that $W_n^{\ne}(\x,t) = W^{<}_n(\x,t) + W^{>}_n(\x,t)$ and  $a_i(t) + b_i(t) =  [i]_t + it[i-2]_t$.
\end{proof}

 \begin{cor} \label{circecor} We have,
\bq  \label{circWeeq} \sum_{n\ge 1} \tilde W_n(\x,t) z^n = \frac{\sum_{i\ge 1}  i t^{i-1} e_i(\x) z^i} {D(\x,t,z)}.
\eq
Consequently, $\tilde W_n(\x,t)$ is  $e$-positive.
\end{cor}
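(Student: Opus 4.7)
The plan is to combine the three generating function identities of Theorem~\ref{mainth} via the refinement
\[
\tilde W_n(\x,t) \;=\; t\,W_n^<(\x,t) \,+\, W_n^>(\x,t) \,+\, W_n^=(\x,t)
\]
recorded in (\ref{refineeq2}). Since all three right-hand sides in Theorem~\ref{mainth} share the common denominator $D(\x,t,z)$, summing them yields the single fraction
\[
\sum_{n\ge 1}\tilde W_n(\x,t)\,z^n \;=\; \frac{e_1(\x)\,z \;+\; \sum_{i\ge 2}\bigl(t\,a_i(t)+b_i(t)-c_i(t)\bigr)\,e_i(\x)\,z^i}{D(\x,t,z)}.
\]
The heart of the proof is therefore the identity $t\,a_i(t)+b_i(t)-c_i(t)=i\,t^{i-1}$ for $i\ge 2$ (the $i=1$ case is the $e_1(\x)z$ term).

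To verify this identity, I would use the explicit expressions in the statement of Theorem~\ref{mainth}. From $a_i(t)=\sum_{j=0}^{i-2}(j+1)t^j$, reindexing with $k=j+1$ gives $t\,a_i(t)=\sum_{k=1}^{i-1}k\,t^k$. Adding $b_i(t)=\sum_{j=1}^{i-1}(i-j)t^j$ telescopes the coefficients: for each $1\le j\le i-1$ we get $j+(i-j)=i$, so
\[
t\,a_i(t)+b_i(t)\;=\;i\sum_{j=1}^{i-1}t^j\;=\;it\,[i-1]_t.
\]
Subtracting $c_i(t)=it\,[i-2]_t$ leaves exactly the top term $i\,t^{i-1}$, as required. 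This is a short algebraic check and should be the only nontrivial step.

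For $e$-positivity, I would expand $1/D(\x,t,z)$ as the geometric series (\ref{Deq}). Each factor $t[i-1]_t\,e_i(\x)\,z^i$ has coefficients in $\mathbb N[t]$ times an $e_i$, and the numerator $\sum_{i\ge 1} i\,t^{i-1}\,e_i(\x)\,z^i$ likewise has nonnegative integer coefficients in $t$ times $e_i$'s. Multiplying out and extracting the coefficient of $z^n$ therefore expresses $\tilde W_n(\x,t)$ as a polynomial in $t$ whose coefficients are $\mathbb N$-linear combinations of products $e_{\lambda_1}(\x)\cdots e_{\lambda_\ell}(\x)$, i.e., $e$-positive symmetric functions. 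The only real obstacle in the whole argument is the polynomial identity above, which is routine; everything else is an assembly of already-proved ingredients from Theorem~\ref{mainth}.
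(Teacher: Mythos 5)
Your proposal is correct and is essentially the paper's argument: both produce the generating function as a single fraction over the common denominator $D(\x,t,z)$ coming from Theorem~\ref{mainth}, reduce the numerator to a one-line polynomial identity, and deduce $e$-positivity from the geometric-series expansion (\ref{Deq}) exactly as in Corollary~\ref{lesgrcor}. The only cosmetic difference is that the paper uses the decomposition $\tilde W_n(\x,t)=tW_n^<(\x,t)+(W_n(\x,t)-W_n^<(\x,t))$ together with (\ref{lesseq}), (\ref{goodtcarleq}), and (\ref{tminu1eq}), whereas you sum all three parts of Theorem~\ref{mainth} via (\ref{refineeq2}) and verify $t\,a_i(t)+b_i(t)-c_i(t)=i\,t^{i-1}$ directly (which is consistent with the identities $t\,a_i(t)+b_i(t)=it[i-1]_t$ and $c_i(t)=it[i-2]_t$ recorded after Theorem~\ref{mainth}); the two computations are equivalent.
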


\begin{proof} 
We use the fact that
\bq \tilde W_n(\x,t) = t W^<_n(\x,t) + (W_n(\x, t) - W^<_n(\x,t)) \eq
and equations~(\ref{lesseq}), (\ref{tminu1eq}), and~(\ref{goodtcarleq}).
\end{proof}

\begin{cor} \label{circecor2} We have,
\bq \label{circWeeq2} \sum_{n\ge 1} \tilde W_n(\x,t) z^n =  \frac{(1-t)\frac{\partial} {\partial t} E(tz)}{E(tz)-tE(z)}.\eq
\end{cor}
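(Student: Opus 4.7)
The strategy is to start from the formula in Corollary~\ref{circecor}, namely
\[
\sum_{n\ge 1} \tilde W_n(\x,t) z^n = \frac{\sum_{i\ge 1} i t^{i-1} e_i(\x) z^i}{D(\x,t,z)},
\]
and verify that both the numerator and the denominator, after multiplication by the common factor $1-t$, match the ones in the target expression. No new combinatorial argument is needed; the work is purely a generating-function identity.

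For the numerator, the plan is to recognize that differentiating $E(tz) = \sum_{n \ge 0} e_n(\x) t^n z^n$ with respect to $t$ gives
\[
\frac{\partial}{\partial t} E(tz) = \sum_{i \ge 1} i t^{i-1} e_i(\x) z^i,
\]
so multiplying by $1-t$ turns the numerator of Corollary~\ref{circecor} into $(1-t)\frac{\partial}{\partial t}E(tz)$ exactly.

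For the denominator, I would show directly that $E(tz)-tE(z) = (1-t)D(\x,t,z)$. Expanding,
\[
E(tz) - tE(z) = (1-t) + \sum_{n \ge 2} e_n(\x)\bigl(t^n - t\bigr) z^n = (1-t) - \sum_{n \ge 2} e_n(\x)\, t(1-t^{n-1}) z^n,
\]
and since $1-t^{n-1} = (1-t)[n-1]_t$, factoring out $(1-t)$ yields $(1-t)\,D(\x,t,z)$ from the definition of $D(\x,t,z)$ in (\ref{Ddef}).

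Combining these two observations, multiplying numerator and denominator of the fraction in Corollary~\ref{circecor} by $1-t$ gives exactly (\ref{circWeeq2}). There is no real obstacle here; the only thing to be careful about is the $n=1$ term in the denominator expansion, which vanishes because $t^1 - t = 0$ and so does not contribute a spurious $e_1(\x)z$ term.
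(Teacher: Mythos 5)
Your proof is correct and follows the same route the paper takes (implicitly): multiply the numerator and denominator of the formula in Corollary~\ref{circecor} by $1-t$, recognizing $\sum_{i\ge 1} i t^{i-1} e_i(\x) z^i = \frac{\partial}{\partial t}E(tz)$ and $E(tz)-tE(z) = (1-t)D(\x,t,z)$. Your attention to the vanishing $n=1$ term in the denominator is exactly the small check needed; nothing is missing.
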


In  \cite[Theorem 5.1]{sw1} (see also~\cite[Corollary C.5]{sw4}), it is observed  that (\ref{goodtcarleq}) implies that the Smirnov enumerator $W_n(\x,t)$ has a stronger property than $e$-positivity, namely  $e$-unimodality, and in \cite{e2,e1} the same is observed for   $\tilde W^{\ne}_n(\x,t)$ as a consequence of (\ref{tcirceq}).  Here we show that $W^{\ne}_n(\x,t)$ also has the stronger property, while $\tilde W_n(\x,t)$ does not.

A polynomial $A(t)=a_0 + a_1 t +\cdots  + a_{n} t^{n} \in \Q[t]$ is said to be {\em unimodal}\footnote{Note that our definition is nonstandard in that positivity of the coefficients is also required.} if 
$$0 \le a_0 \le a_1\le \cdots \le a_c \ge a_{c+1} \ge a_{c+2} \ge \cdots \ge a_n \ge 0,$$ for some $c$. Let $\Lambda_\Q$ denote the $\Q$-algebra of symmetric functions over $\x:=x_1,x_2,\dots$. Given $r,s \in \Lambda_\Q$, we say that $r \le_e s$ if $s-r$ is $e$-positive.  A polynomial $A(t)=a_0 + a_1 t +\cdots  + a_{n} t^{n} \in \Lambda_\Q[t]$ is said to be $e$-{\em unimodal} if
$$0 \le_e a_0 \le_e a_1 \le_e \cdots \le_e a_c \ge_e a_{c+1}\ge_e a_{c+2}\ge_e \cdots \ge_e a_n \ge_e 0,$$ for some $c$.   We say that $A(t)$ (over any coefficient ring) is {\em palindromic} with center of symmetry $\frac n 2$  if  $a_j = a_{n-j}$  for $0 \le j \le n$.  (Note that the center of symmetry is unique unless $A(t)$ is the zero polynomial, in which case every number of the form $\frac n 2$, where $n \in \N$, satisfies the definition of center of symmetry.)  It is easy to see that $A(t) \in \Lambda_{\Q}[t] $ is $e$-unimodal and palindromic with center of symmetry $c$  if and only if  all the coefficients in the $e$-expansion of $A(t)$ are unimodal and palindromic polynomials in $t$ with the same center of symmetry $c$; see \cite[Proposition B.3]{sw4}.

\begin{lemma} \label{uniprop} Let $(g_n(t))_{n\ge 2}$ be a sequence of polynomials in $\Q[t]$,
such that each $g_n(t)$   is  unimodal  and palindromic with center of symmetry $\frac {n+r} 2$, where 
$r \ge -2$ is a fixed integer.  
If $(G_n(\x,t))_{n \ge 2}$ is a sequence of polynomials in $\Lambda_\Q[t]$ that satisfies
$$\sum_{n \ge 2} G_n(\x,t) z^n = \frac {\sum_{n \ge 2} g_n(t) e_n(\x) z^n} {D(\x,t,z)},$$ 
where $D(\x,t,z)$ is defined in (\ref{Ddef}),
 then each $G_n(\x,t)$ is   $e$-unimodal and palindromic    with center of symmetry $\frac {n+r} 2$. \end{lemma}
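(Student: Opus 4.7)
The plan is to expand $1/D(\x,t,z)$ as a geometric series to write $G_n(\x,t)$ as a nonnegative combination of products of polynomials in $t$, and then verify palindromicity and unimodality by tracking centers of symmetry. Specifically, using the expansion (\ref{Deq}) together with the given formula for $\sum_{n\ge 2}G_n(\x,t) z^n$, I would obtain
$$G_n(\x,t) \;=\; \sum g_{n_0}(t) \prod_{j=1}^{m} t[n_j - 1]_t \cdot e_{n_0}(\x) e_{n_1}(\x) \cdots e_{n_m}(\x),$$
where the sum ranges over $m \ge 0$ and tuples $(n_0, n_1, \ldots, n_m)$ of integers $\ge 2$ summing to $n$ (with the empty product interpreted as $1$ when $m=0$). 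Collecting the products $e_{n_0}\cdots e_{n_m}$ into basis elements $e_\lambda(\x)$ in the standard way, the coefficient of each $e_\lambda(\x)$ in $G_n(\x,t)$ becomes a nonnegative $\Z$-linear combination of polynomials of the form $g_{n_0}(t) \prod_{j\ge 1} t[n_j - 1]_t$.

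Next, I would observe that each factor $t[i-1]_t = t + t^2 + \cdots + t^{i-1}$ is a palindromic unimodal polynomial with nonnegative coefficients and center of symmetry $i/2$, while $g_{n_0}(t)$ is palindromic and unimodal with center $(n_0 + r)/2$ by hypothesis. Invoking the classical fact that a product of palindromic unimodal polynomials with nonnegative coefficients is again palindromic and unimodal with centers of symmetry adding, each summand $g_{n_0}(t) \prod_{j\ge 1} t[n_j - 1]_t$ is palindromic and unimodal in $t$ with common center
$$\frac{n_0 + r}{2} + \sum_{j=1}^{m} \frac{n_j}{2} \;=\; \frac{n+r}{2}.$$
Because a sum of palindromic unimodal polynomials sharing a common center of symmetry retains both properties with that center, the coefficient of each $e_\lambda(\x)$ in $G_n(\x,t)$ is itself palindromic and unimodal in $t$ with center $(n+r)/2$.

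Applying the characterization \cite[Proposition B.3]{sw4} recalled just above the lemma statement---that a polynomial in $\Lambda_\Q[t]$ is $e$-unimodal and palindromic with center $c$ if and only if every $e$-basis coefficient is a unimodal and palindromic polynomial in $t$ with that same center---then immediately yields the claim. I anticipate no serious obstacle; the only real work is careful bookkeeping of centers of symmetry, and the hypothesis $r \ge -2$ is precisely what guarantees $(n+r)/2 \ge 0$ for every $n \ge 2$, which is what allows $g_n(t)$ to be an honest polynomial in $t$ with the prescribed nonnegative center of symmetry.
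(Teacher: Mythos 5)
Your proposal is correct and follows essentially the same route as the paper's proof: expand $1/D(\x,t,z)$ geometrically via (\ref{Deq}), observe that each resulting term $g_{n_0}(t)\,t^m\prod_j[n_j-1]_t$ is a product of palindromic unimodal polynomials whose centers of symmetry add up to $\frac{n+r}{2}$, and conclude with \cite[Propositions B.1 and B.3]{sw4}. The only differences are cosmetic bookkeeping of indices, so no further comment is needed.
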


 \begin{proof}
  We use Propositions~B.1 and~B.3 of \cite{sw4}. 
By (\ref{Deq}),   
\bq \label{Geq} G_n(\x,t) = \sum_{m \ge 1} \sum_{\scriptsize \begin{array}{c}  k_1,\dots, k_m \ge 2 \\ \sum_{i=1}^m k_i = n \end{array} }\!\! e_{k_1} \dots e_{k_m} t^{m-1}  g_{k_1}(t) \prod_{i=2}^{m}[k_i-1]_t .\eq  
For each nonzero $g_{k_1}(t)$, the polynomial 
$t^{m-1}g_{k_1}(t)  \prod_{i=2}^{m} [k_i-1]_t $ is a product of palindromic, unimodal polynomials.  Hence, the product is also  palindromic  and unimodal with center of symmetry equal to $$m-1+ \frac {k_1+r} 2 + \sum_{i=2}^m \frac {k_i-2} 2 = \frac {n+r} 2 .$$
Since each such product has the same center of symmetry, $G_n(\x,t)$ is palindromic and $e$-unimodal with center of symmetry $\frac {n+r} 2 $.
 \end{proof}
 
 \begin{cor}[of Corollary~\ref{noteqcor}]  For all $n \ge 2$, the Smirnov word enumerator $W^{\ne}_n(\x,t)$ is  $e$-unimodal and palindromic with center of symmetry $\frac {n-1} 2$.
\end{cor}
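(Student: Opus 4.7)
The plan is to apply Lemma~\ref{uniprop} directly to the generating function identity of Corollary~\ref{noteqcor}. Setting $g_n(t) := [n]_t + nt[n-2]_t$ for $n \ge 2$, Corollary~\ref{noteqcor} gives
$$\sum_{n \ge 2} W^{\ne}_n(\x,t) z^n = \frac{\sum_{n \ge 2} g_n(t) e_n(\x) z^n}{D(\x,t,z)},$$
which is precisely the form required by Lemma~\ref{uniprop}. So the entire task reduces to verifying that each $g_n(t)$ is palindromic and unimodal with center of symmetry $\frac{n-1}{2}$; that is, we should take $r = -1$ in the lemma.

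To verify the hypothesis, I would simply expand: since $[n]_t = 1 + t + t^2 + \cdots + t^{n-1}$ and $nt[n-2]_t = nt + nt^2 + \cdots + nt^{n-2}$, summing coefficient by coefficient gives
$$g_n(t) = 1 + (n+1)t + (n+1)t^2 + \cdots + (n+1)t^{n-2} + t^{n-1}.$$
The coefficient sequence $1, n+1, n+1, \ldots, n+1, 1$ (of length $n$) is visibly symmetric about position $\frac{n-1}{2}$, so $g_n(t)$ is palindromic with center of symmetry $\frac{n-1}{2} = \frac{n + r}{2}$ for $r = -1$. The sequence is also unimodal for every $n \ge 2$, since $1 \le n+1$ and all intermediate terms are equal.

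Having verified the hypotheses, Lemma~\ref{uniprop} immediately yields that $W^{\ne}_n(\x,t)$ is $e$-unimodal and palindromic with center of symmetry $\frac{n-1}{2}$. There is no substantive obstacle here; the only thing to watch is the bookkeeping in identifying the correct value $r = -1$ so that the lemma's output center $\frac{n+r}{2}$ matches the claimed $\frac{n-1}{2}$.
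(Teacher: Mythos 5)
Your proposal is correct and follows exactly the paper's argument: apply Lemma~\ref{uniprop} (with $r=-1$) to the identity of Corollary~\ref{noteqcor}, after checking that $[n]_t + nt[n-2]_t$ is unimodal and palindromic with center of symmetry $\frac{n-1}{2}$. The only difference is that you spell out the coefficient expansion $1,\,n+1,\dots,n+1,\,1$ explicitly, which the paper leaves as an observation.
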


\begin{proof} 
Since  $[i]_t + it[i-2]_t$ is unimodal and palindromic  with center of symmetry $\frac{i-1} 2$, the result follows from Lemma~\ref{uniprop} 
\end{proof}

We note that although $W_n(\x,t)$, $W^{\ne}_n(\x,t)$, and $\tilde W^{\ne}(\x,t)$ are all $e$-unimodal and palindromic, this is not the case for $\tilde W_n(\x,t)$.  Indeed, it follows from Corollary~\ref{circecor} that
$$\tilde W_5(\x,t) = e_{4,1} t + (e_{2,2,1} + e_{4,1} + 2 e_{3,2}) t^2 + (e_{4,1} + 5 e_{3,2}) t^3 + (5e_5) t^4,$$
which is neither palindromic nor $e$-unimodal.  However, for certain partitions $\lambda$,  the coefficient of $e_\lambda$ is a palindromic and unimodal polynomial in $t$.

\begin{cor}[of Corollary~\ref{circecor}] For $\lambda \vdash n$, let $c_\lambda(t)$ be the coefficient of $e_\lambda$ in the expansion of $\tilde W_n(\x,t)$ in the elementary symmetric functions.
If  $\lambda=(\lambda_1\ge \dots \ge \lambda_{k-1} \ge 1)$ then $$c_\lambda(t) = t^{k-1}\prod_{i=1}^{k-1} [\lambda_i-1]_t,$$ which is palindromic and unimodal.  If $\lambda = j^k$ then  $$c_\lambda(t) = j t^{j+k-2} [j-1]_t^{k-1},$$ which is palindromic and unimodal.
\end{cor}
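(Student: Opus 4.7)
The plan is to extract $c_\lambda(t)$ directly from the generating function in Corollary~\ref{circecor} and then read off palindromicity and unimodality from the resulting factored expression. Using the geometric expansion~(\ref{Deq}), Corollary~\ref{circecor} yields
\[
\sum_{n\ge 1}\tilde W_n(\x,t)\,z^n = \Bigl(\sum_{i\ge 1} i\,t^{i-1}\,e_i(\x)\,z^i\Bigr) \sum_{m\ge 0}\Bigl(\sum_{j\ge 2} t[j-1]_t\,e_j(\x)\,z^j\Bigr)^m.
\]
For a partition $\lambda\vdash n$ with $\ell(\lambda)=k$, each contribution to the coefficient of $e_\lambda z^n$ corresponds to singling out one ``distinguished'' part of $\lambda$ to come from the numerator factor (contributing $\lambda_{i^*}t^{\lambda_{i^*}-1}$), with the remaining $k-1$ parts arising from the $m=k-1$ term of the denominator expansion (each contributing $t[\lambda_i-1]_t$). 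Since the denominator series is supported on parts $\ge 2$, the remaining parts must all be $\ge 2$.

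For case~(i), with $\lambda=(\lambda_1,\dots,\lambda_{k-1},1)$, the smallest part is $1$ and the denominator sum has no $e_1$ term; hence the distinguished part is forced to be that $1$, contributing $1\cdot t^0=1$. The remaining parts then contribute $\prod_{i=1}^{k-1} t[\lambda_i-1]_t$, giving
$c_\lambda(t)=t^{k-1}\prod_{i=1}^{k-1}[\lambda_i-1]_t$.
The formula correctly vanishes when $\lambda$ has a second part equal to $1$, since then $[0]_t=0$ appears as a factor. For case~(ii), with $\lambda=j^k$, one $j$ is distinguished (contributing $jt^{j-1}$) and the remaining $k-1$ copies of $j$ come from the $(k-1)$-fold product of the single denominator summand $t[j-1]_t e_j z^j$, giving $c_\lambda(t)=jt^{j-1}\cdot(t[j-1]_t)^{k-1}=jt^{j+k-2}[j-1]_t^{k-1}$.

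For palindromicity and unimodality, each $[a]_t=1+t+\cdots+t^{a-1}$ is palindromic and unimodal with center of symmetry $(a-1)/2$; a product of palindromic, unimodal polynomials in $\Q[t]$ is again palindromic and unimodal with center of symmetry equal to the sum of the individual centers, and multiplication by $t^c$ simply shifts the center by $c$ (cf.\ Propositions~B.1 and~B.3 of~\cite{sw4}). Applied to the two factored expressions above, this immediately shows that $c_\lambda(t)$ is palindromic and unimodal in each case.

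The main subtlety is verifying that only the contributions identified above arise when extracting the coefficient of $e_\lambda z^n$: in case~(i) this follows from the uniqueness of the $1$ in $\lambda$ together with the restriction $j\ge 2$ in the denominator; in case~(ii) the homogeneity of $\lambda=j^k$ ensures that the only way to build $e_j^k$ is to pick a single $j$ from the numerator and use $e_j^{k-1}$ from the $m=k-1$ factor, with no multinomial ambiguity. Once the extraction is handled, the factored forms make palindromicity and unimodality immediate.
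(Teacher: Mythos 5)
Your overall approach --- extracting the coefficient of $e_\lambda$ from the generating function of Corollary~\ref{circecor} via the geometric expansion~(\ref{Deq}) and then reading off palindromicity and unimodality from the factored form --- is the natural route, and your treatment of case~(ii) and of the choice of distinguished part in case~(i) is fine. However, there is a genuine gap in the coefficient extraction for case~(i). When you expand $\bigl(\sum_{j\ge 2}t[j-1]_t\,e_j(\x)z^j\bigr)^{k-1}$, the coefficient of $e_{\lambda_1}\cdots e_{\lambda_{k-1}}z^{n-1}$ is not $\prod_{i=1}^{k-1}t[\lambda_i-1]_t$ but that product times the number of ordered tuples $(j_1,\dots,j_{k-1})$ whose underlying multiset is $\{\lambda_1,\dots,\lambda_{k-1}\}$, i.e.\ the multinomial coefficient $(k-1)!/\prod_{a\ge 2}m_a!$, where $m_a$ is the number of parts of $\lambda$ equal to $a$. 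Your ``main subtlety'' paragraph addresses only the uniqueness of the distinguished part (the $1$ forced into the numerator) and the case~(ii) situation where all parts coincide; it does not address these orderings. The factor equals $1$ exactly when $\lambda_1=\cdots=\lambda_{k-1}$ (or $k\le 2$), which covers every example displayed in the paper ($e_{4,1}$ and $e_{2,2,1}$ in $\tilde W_5(\x,t)$) but not the general case: for $\lambda=(3,2,1)$ the generating function gives $2\,t^2[2]_t[1]_t$, since the coefficient of $e_2e_3z^5$ in $\bigl(t[1]_t e_2 z^2+t[2]_t e_3 z^3+\cdots\bigr)^{2}$ is $2\,t[1]_t\cdot t[2]_t$, not $t[1]_t\cdot t[2]_t$.

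So your derivation establishes the case~(i) formula only under the additional hypothesis that $\lambda_1,\dots,\lambda_{k-1}$ are all equal; in general the coefficient produced by the generating function is the stated expression multiplied by $(k-1)!/\prod_{a\ge 2}m_a!$. Since that correction is a positive integer constant, your palindromicity and unimodality argument (products of palindromic unimodal polynomials with centers adding, via Propositions~B.1 and~B.3 of \cite{sw4}) applies unchanged to the true coefficient, so the qualitative conclusion survives. But to make the proof (and the identity) correct you must either insert the multinomial factor into the formula or restrict case~(i) to partitions whose parts other than the final $1$ are all equal; as written, the step ``the remaining parts then contribute $\prod_{i=1}^{k-1}t[\lambda_i-1]_t$'' is where the argument fails.
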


\section{Chromatic quasisymmetric function of  the  cycle} \label{conssec}

In this section, we discuss the connection between the  Smirnov word enumerators and  the chromatic quasisymmetric functions introduced by Shareshian and the second named author in \cite{sw5,sw4}.   We use results of the previous section to provide an example of an e-positive chromatic quasisymmetric function not covered by the  refinement of the Stanley-Stembridge $e$-positivity conjecture appearing in \cite{sw5,sw4} or its directed graph extension appearing in \cite{e2,e1}.

For any graph $G = ([n],E)$,  define the {\em chromatic quasisymmetric function}\footnote{In the definition given in \cite{sw4} ascents are counted instead of descents, but in the  case that $X_G(\x,t)$ is symmetric, it is shown in \cite[Corollary 2.7]{sw4}  that the definitions are equivalent.} of  $G$  as 
$$X_G(\x,t) := \sum_{\kappa \in \mathcal C(G)} t^{\des(\kappa)} x_{\kappa(1)} x_{\kappa(2)} \cdots x_{\kappa(n)},$$
where 
$\mathcal C(G)$ is the set of proper colorings $\kappa:[n] \to \PP$ of $G$  and 
\bq \label{ascdef}  \des(\kappa) :=| \{\{i,j\} \in E : i < j \mbox{ and  } \kappa(i) > \kappa(j) \}|.\eq
When $t=1$, $X_G(\x,t)$ is Stanley's chromatic symmetric function $X_G(\x)$. Note that $X_G(\x,t)$ is a polynomial in $t$ whose coefficients are quasisymmetric functions.
 We view $G$ as a labeled graph  and note that  the definition of $X_G(\x,t)$ depends on the vertex labeling, and not just on the isomorphism class of $G$, as is the case for  $X_G(\x)$.

Since Smirnov words of length $n$ can be viewed as proper colorings of the   naturally labeled path $$P_n:=([n],\{\{i,i+1 \}: i \in [n-1]\}),$$ it follows that   $$W_n(\x,t)=X_{P_n}(\x,t).$$

The Smirnov word enumerator
$\tilde W_n^{\ne}(\x,t)$ can also be viewed as a chromatic quasisymmetric function, but in the  more general sense considered by the first named author    \cite{e2,e1},  in which labeled graphs are replaced by directed graphs and  the definition of $\des(\kappa)$ given  in (\ref{ascdef}) is  replaced by
$$\des(\kappa) := |\{(i,j) \in E : \kappa(i) > \kappa(j) \}|.$$  Labeled graphs can be viewed as directed graphs  by orienting each edge from smaller vertex to larger vertex; so the digraph version of chromatic quasisymmetric function is more general than the labeled graph version.  One can see that
$$\tilde W_n^{\ne}(\x,t) = X_{\overrightarrow C_n}(\x,t),$$
where $\overrightarrow C_n$ is the directed cycle defined by
$$\overrightarrow C_n:= ([n], \{(i,i+1 ): i \in [n-1]\}\cup \{(n,1)\}).$$

The longstanding Stanley-Stembridge conjecture \cite{st4}  asserts that $X_G(\x)$ is $e$-positive when $G$ is the incomparability graph of a $(3+1)$-free poset.  
In \cite{gp}, Guay-Paquet proves that if the Stanley-Stembridge conjecture holds for  incomparability graphs of posets that are both $(3+1)$-free and $(2+2)$-free  (known as {\em unit interval graphs}) then it holds in general. 
  
   In \cite{sw4} Shareshian and the second named author show that $X_G(\x,t)$ is symmetric when $G$ is a  unit interval graph with a certain natural labeling; these are called {\em natural unit interval graphs}.  They  also conjecture that $X_G(\x,t)$ is $e$-positive and $e$-unimodal when $G$ is a natural unit interval graph.  The path $P_n$ is an example of a natural unit interval graph for which the conjecture  holds  since $X_{P_n}(\x,t) = W_n(\x,t)$.   The symmetry result and $e$-positivity conjecture of \cite{sw4} are generalized in \cite{e2,e1} to a class of directed graphs called {\em indifference digraphs} in \cite{e1}.   With the view that a labeled graph is an acyclic digraph, the natural unit interval graphs form the class of acyclic indifference digraphs. The directed cycle $\overrightarrow C_n$ is an example of an indifference digraph for which the extended conjecture  holds since $X_{\overrightarrow C_n}(\x,t) = \tilde W^{\ne}_n(\x,t)$. 

Here we consider the labeled cycle
\bq \label{cycdef} C_n :=([n], \{\{i,i+1 \}: i \in [n-1]\}\cup \{\{1,n\}\}). \eq
If we view $C_n$ as a directed graph (by orienting its edges from smaller vertex to larger vertex), we get a directed graph
that is identical to the directed cycle $\overrightarrow C_n$ except for the edge $(n,1)$ in  $\overrightarrow C_n$ which is oriented as $(1,n)$ in $C_n$.   For $n \ge 4$, the labeled cycle $C_n$ is not  a natural unit interval graph, nor is it an  indifference digraph.  Nevertheless, since 
\bq \label{labcycWeq} X_{C_n}(\x,t) = W_n^<(\x,t) + t W_n^>(\x,t) ,\eq 
it follows from Corollary ~\ref{lesgrcor} that  $X_{C_n}(\x,t)$ is  symmetric and $e$-positive.  This shows that the class of labeled graphs with $e$-positive chromatic quasisymmetric function is strictly larger than the class of natural unit interval graphs, and the class of digraphs with $e$-positive chromatic quasisymmetric function is strictly larger than the class of  indifference digraphs.

Next we address the question of $e$-unimodality of $X_{C_n}(\x,t)$.   From Theorem~\ref{mainth}  and equation (\ref{labcycWeq}), we obtain the next result.  For $n >0$, let $$ [-n]_t := \frac{t^{-n}-1}{t-1} = - t^{-n}[n]_t.$$
 \begin{cor}[of Theorem~\ref{mainth}]  \label{cyclecor} We have,
 \bq  \label{labcyceq}\sum_{n\ge 2} X_{C_n}(\x,t) z^n = \frac{   \sum_{i\ge 2}( [2]_t[i]_t+ it^2[i-3]_t) e_i(\x) z^i} {D(\x,t,z)},\eq
 where $D(\x,t,z)$ is defined in (\ref{Ddef}).
 \end{cor}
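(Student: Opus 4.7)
The plan is to combine the identity (\ref{labcycWeq}), which asserts $X_{C_n}(\x,t) = W_n^<(\x,t) + tW_n^>(\x,t)$, with the two generating function formulas (\ref{lesseq}) and (\ref{greateq}) from Theorem~\ref{mainth}. Summing over $n \ge 2$ immediately yields a single rational expression with denominator $D(\x,t,z)$ and numerator
$$\sum_{i \ge 2}\bigl(a_i(t) + t\,b_i(t)\bigr)\, e_i(\x)\, z^i.$$
Thus the entire corollary reduces to the polynomial identity
$$a_i(t) + t\,b_i(t) = [2]_t[i]_t + i t^2 [i-3]_t \qquad (i \ge 2),$$
interpreted via the convention $[-n]_t := -t^{-n}[n]_t$ for negative indices.

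To verify this identity I would just compare coefficients of $t^k$ on each side. On the left, $a_i(t) = \sum_{j=0}^{i-2}(j+1)t^j$ contributes $k+1$ to the coefficient of $t^k$ when $0 \le k \le i-2$, while $tb_i(t) = \sum_{j=1}^{i-1}(i-j)t^{j+1}$ contributes $i-k+1$ when $2 \le k \le i$. Adding, one reads off the coefficient sequence $1,\, 2,\, i+2,\, i+2,\, \ldots,\, i+2,\, 2,\, 1$ across $k = 0, 1, \ldots, i$ (for $i \ge 4$). On the right, $[2]_t[i]_t = (1+t)(1 + t + \cdots + t^{i-1})$ has the palindromic coefficient sequence $1, 2, 2, \ldots, 2, 1$, and $it^2[i-3]_t$ adds $i$ to each coefficient of $t^k$ for $2 \le k \le i-2$. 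The two sequences agree.

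The one subtlety is the boundary cases $i=2, 3$, which is where the convention for $[-n]_t$ gets used. For $i=3$ we have $[0]_t = 0$ so the correction term vanishes, and indeed $a_3 + tb_3 = (1+2t) + (2t^2 + t^3) = [2]_t[3]_t$. For $i=2$ the convention gives $2t^2[-1]_t = -2t$, so the right-hand side is $(1+t)^2 - 2t = 1+t^2$, which matches $a_2 + tb_2 = 1 + t \cdot t = 1 + t^2$. These small-case checks are the only place the proof could trip, and once handled the algebra is entirely straightforward; the substantive content has already been done in Theorem~\ref{mainth}.
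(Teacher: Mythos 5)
Your proposal is correct and follows exactly the route the paper takes: combine the decomposition $X_{C_n}(\x,t) = W_n^<(\x,t) + tW_n^>(\x,t)$ of (\ref{labcycWeq}) with (\ref{lesseq}) and (\ref{greateq}) from Theorem~\ref{mainth}, so that everything reduces to the numerator identity $a_i(t)+t\,b_i(t)=[2]_t[i]_t+it^2[i-3]_t$. Your coefficient-by-coefficient verification, including the $i=2,3$ boundary cases via the convention $[-n]_t=-t^{-n}[n]_t$, correctly supplies the algebra the paper leaves implicit.
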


\begin{thm} Let $n \ge 2$.
\begin{enumerate} 
\item If $n$ is odd,  $X_{C_n}(\x,t)$ is $e$-unimodal  and palindromic with center of symmetry $\frac n 2$. 
\item If $n$ is even, \begin{enumerate}
\item  $X_{C_n}(\x,t)$ is  $e$-positive and palindromic  with center of symmetry $\frac n 2$, but is not $e$-unimodal. 
\item  $X_{C_n}(\x,t) + t^{\frac n 2} e_{2^{\frac n 2}}(\x)$ is $e$-unimodal and palindromic 
with center of symmetry $\frac n 2$. \end{enumerate} \end{enumerate}
\end{thm}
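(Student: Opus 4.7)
I would read off the $e$-expansion of $X_{C_n}(\x,t)$ from Corollary~\ref{cyclecor} and analyze each coefficient $c_\lambda(t)$ of $e_\lambda$ separately. Expanding $1/D(\x,t,z)$ via~(\ref{Deq}) and grouping orderings of $\lambda$ by their first entry yields
\[
c_\lambda(t) = \sum_{d \in D(\lambda)} g_d(t)\, N_{\lambda,d}\, \mu_{\lambda-\{d\}}(t),
\]
where $g_i(t) := [2]_t[i]_t + it^2[i-3]_t$ is the numerator polynomial of Corollary~\ref{cyclecor}, $\mu_\nu(t) := \prod_j (t[j-1]_t)^{m_j(\nu)}$, $D(\lambda)$ is the set of distinct parts of $\lambda$, and $N_{\lambda,d}$ is the number of orderings of $\lambda$ starting with $d$.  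Each summand is palindromic with center $\tfrac12 d + \tfrac12(n-d) = n/2$, so $c_\lambda(t)$ (and hence $X_{C_n}(\x,t)$ as a polynomial in $t$) is palindromic with center $n/2$.  The $e$-positivity is already established before the theorem statement via $X_{C_n} = W_n^< + tW_n^>$ and Corollary~\ref{lesgrcor}.

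A quick direct calculation shows that $g_i(t)$ is unimodal palindromic with center $i/2$ for every $i \ge 3$, while $g_2(t) = 1 + t^2$ is palindromic but not unimodal.  Consequently every summand of $c_\lambda(t)$ with $d \ge 3$ is a product of unimodal palindromic polynomials with nonnegative coefficients, and so is itself unimodal palindromic by the product principle invoked in the proof of Lemma~\ref{uniprop}.  The only problematic contribution is the $d = 2$ summand $(1+t^2) N_2\, \mu_{\lambda-\{2\}}(t)$, which I would handle via the following key lemma.

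\textbf{Key Lemma.}  \emph{If $\nu$ has at least one part $\ge 3$, then $(1+t^2)\,\mu_\nu(t)$ is unimodal palindromic with center $|\nu|/2 + 1$.}

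To prove the Key Lemma, factor $\mu_\nu(t) = t^{\ell(\nu)} \prod_{j \ge 3}[j-1]_t^{m_j(\nu)}$, isolate one factor $[j_0-1]_t$ with $j_0 \ge 3$, and verify by direct expansion that
\[
(1+t^2)[a]_t = 1 + t + 2t^2 + 2t^3 + \cdots + 2t^{a-1} + t^a + t^{a+1} \qquad (a \ge 2),
\]
which is manifestly unimodal palindromic.  Multiplying by the remaining unimodal palindromic factors $[j-1]_t$ and the monomial $t^{\ell(\nu)}$ and reapplying the product principle yields the lemma.

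Since $\lambda - \{2\}$ contains a part $\ge 3$ precisely when $\lambda \ne 2^k$, the Key Lemma together with the $d \ge 3$ analysis gives unimodality of $c_\lambda(t)$ for every $\lambda \vdash n$ with parts $\ge 2$ and $\lambda \ne 2^{n/2}$.  For odd $n$ the partition $2^{n/2}$ does not occur, yielding (1).  For even $n$ a direct computation gives $c_{2^{n/2}}(t) = (1+t^2)\,t^{n/2-1} = t^{n/2-1} + t^{n/2+1}$; since $e_{2^{n/2}}$ is a basis element, no other $\lambda$ contributes to its coefficient, and the missing $t^{n/2}$ term produces a $-e_{2^{n/2}}$ summand in $[t^{n/2}] X_{C_n} - [t^{n/2-1}] X_{C_n}$, which defeats $e$-unimodality and proves (2a); adding $t^{n/2} e_{2^{n/2}}$ replaces the coefficient of $e_{2^{n/2}}$ by $t^{n/2-1}[3]_t$, which is unimodal, and changes no other $c_\lambda(t)$, proving (2b).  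The main obstacle is the Key Lemma: $(1+t^2)f$ can fail to be unimodal for an arbitrary palindromic unimodal $f$ (e.g.\ $f = 1+10t+t^2$), so the proof hinges crucially on the ``flat'' product structure of $\mu_\nu$.
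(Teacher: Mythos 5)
Your proof is correct and follows essentially the same route as the paper: the paper splits $X_{C_n}=U_n+V_n$ according to whether the numerator factor is $e_2$ or $e_i$ with $i\ge 3$ (your grouping by the first part $d$ of the ordering of $\lambda$), handles the $i\ge 3$ part via Lemma~\ref{uniprop}, and resolves the $e_2$ part by the identical computation of $(1+t^2)[a]_t$, isolating the exceptional partition $2^{n/2}$ exactly as you do. Your ``Key Lemma'' is precisely the pairing trick used in the paper's treatment of $U_n$, so no further comparison is needed.
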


\begin{proof} Let $U_n(\x,t)$ and $V_n(\x,t)$ be defined respectively by 
$$\sum_{n \ge 2} U_n(\x,t) z^n = \frac{ ( [2]_t[2]_t+ 2t^2[2-3]_t) e_2(\x) z^2} {D(\x,t,z)}  $$ 
and $$\sum_{n \ge 2} V_n(\x,t) z^n = \frac{   \sum_{i\ge 3}( [2]_t[i]_t+ it^2[i-3]_t) e_i(\x) z^i} {D(\x,t,z)} .$$
Then $X_{C_n}(\x,t) = U_n(\x,t) + V_n(\x,t)$. 

We have,
$$\sum_{n \ge 2} U_n(\x,t) z^n = \frac {(1+t^2)e_2(\x) z^2}{D(\x,t,z)}.$$
 It follows from (\ref{Deq}) that 
\bq \label{Ueq} U_n(\x,t) = \sum_{m \ge 1} \sum_{\scriptsize \begin{array}{c}  k_2,\dots, k_m \ge 2 \\ \sum_{i=2}^m k_i = n-2 \end{array} }\!\! e_{2}e_{k_2} \dots e_{k_m} t^{m-1}  (1+t^2) \prod_{i=2}^{m}[k_i-1]_t .\eq

Note that for any $k \ge 3$ 
$$(1+t^2)[k]_t = 1+ t + 2t^2 + \dots + 2t^{k-1} +t^k + t^{k+1},$$
and for $k=2$,
$$(1+t^2)[k]_t = 1+t+t^2+t^3.$$  In either case, $(1+t^2)[k]_t $ is 
unimodal and palindromic with center of symmetry $\frac {k+1}2$.
 
 We now use Propositions B.1 and B.3 of \cite{sw4}.  Consider the term of the right side of (\ref{Ueq}) corresponding to the $(m-1)$-tuple $(k_2,\dots,k_{m})$. 
If $k_j \ge 3$ for  some $j\ge 2$ then since $(1+t^2)[k_j-1]_t$  is unimodal and palindromic, $t^{m-1}(1+t^2) \prod_{i=2}^m[k_i-1]_t $ is a product of unimodal and palindromic polynomials.  Hence  $t^{m-1}(1+t^2) \prod_{i=2}^m[k_i-1]_t $ is unimodal and  palindromic with center of symmetry
$$m-1 + \frac{k_j}2+ \sum_{\substack {i = 2 \\ i \ne j}}^m \frac{k_i-2}{2} = \frac n 2.$$ 
It follows that if $\lambda$ has a part of size at least $3$ then the coefficient of  $e_\lambda$ in $U_n(\x,t)$  is palindromic and unimodal with center of symmetry $\frac n 2$.
If $\lambda$ does not have a part of size at least $3$ then all the parts must be $2$, which means that $n$ is even.
Hence if $n$ is odd then $U_n(\x,t)$ is $e$-unimodal and palindromic with center of symmetry $\frac{n}{2}$.  

Now if $\lambda$ does not have a part of size at least $3$ then $\lambda = 2^m$, where $n= 2m$. By (\ref{Ueq}), the coefficient of $e_\lambda$ in $U_n(\x,t)$ is $t^{m-1}(1+t^2)$.  Hence if $\lambda$ does not have a part of size at least $3$ then coefficient of $e_\lambda$ in  $U_n(\x,t)+  t^m e_{2^m}$ is unimodal and palindromic with center of symmetry $m=\frac n 2$.  From the argument in the previous paragraph, the same is true if  $\lambda$ has a part of size at least $3$.  It follows that 
 if $n$ is even, $U_n(\x,t) + t^m e_{2^m}$ is $e$-unimodal and palindromic with center of symmetry $m=\frac n 2$. 

It follows from Lemma~\ref{uniprop} that $V_n(\x,t)$ is also  palindromic and $e$-unimodal with center of symmetry $\frac n 2$. Since $X_{C_n}(\x,t)= U_n(\x,t) + V_n(\x,t)$, Parts (1) and (2b) hold.  Palindromicity of $X_{C_n}(\x,t)$ in the even case follows from Part (2b).  The assertion in Part (2a) that $X_{C_n}(\x,t)$ is not $e$-unimodal in the even case follows from the fact the coefficient of $e_{2^m}(\x)$ is $t^{m-1}(1+t^2)$, which is not unimodal.  
\end{proof}

\section{Expansion in the power sum symmetric functions}

Let $A_n(t)$ be the Eulerian polynomial defined in (\ref{combeuldef}) for $n \ge 1$ and let $$A_0(t):= t^{-1}.$$  Also let  $\omega$ be the standard involution on $\Lambda_\Q$ taking the elementary symmetric function $e_n$ to the complete homogeneous symmetric function $h_n$,
 and let $$H(z) := \sum_{n\ge 0} h_n(\x) z^n, $$ where $h_n(\x)$ is the complete homogenous symmetric function of degree $n$.   
Stembridge  \cite[Proposition 3.3]{stem1}  proves that 
\bq \label{stemeq} 1+ \sum_{n\ge 1} \sum_{\lambda \vdash n} \left (A_{l(\lambda)}(t) \prod_{i=1}^{l(\lambda)} [\lambda_i]_t \right )\frac{p_\lambda(\x)}{z_\lambda} z^n=\frac{ (1-t)H(z) }{H(tz) - tH(z)} \eq
where $\lambda = (\lambda_1 \ge \lambda _2 \ge \dots \ge \lambda_{l(\lambda)})$  and
$$z_\lambda := \prod_{i \ge 1} i^{m_i} m_i! $$ if $\lambda$  has $m_i$ parts of size $i$ for each $i$. 
 By combining this with (\ref{goodtcarleq2}) one obtains the expansion of  $\omega W_n(\x,t)$ in the power sum symmetric functions given in ({\ref{powereq}).

In this section we derive  power sum expansions for the other Smirnov word enumerators.    We will use the $e$-expansion for $\tilde W_n(\x,t)$ obtained in Section~\ref{esec} to obtain the following result, which expresses the coefficients of $\frac{p_\lambda(\x)}{z_\lambda}$ in the power sum expansion of $\omega \tilde W_n(\x,t)$ as a polynomial in $t$ with positive integer coefficients.

\begin{thm} \label{pcircth} For all $n \ge 1$,
\begin{equation} \label{pcirceq} \omega \tilde W_n(\x,t) = \sum_{\lambda \vdash n} \left (A_{l(\lambda)-1}(t) \sum_{i=1}^{l(\lambda)} \lambda_i t^{\lambda_i} \prod_{j \in [l(\lambda)]\setminus\{i\}} [\lambda_j]_t\right ) \frac{p_\lambda(\x)}{z_\lambda} ,\end{equation}
where $\lambda = (\lambda_1 \ge \lambda _2 \ge \dots \ge \lambda_{l(\lambda)})$. 
\end{thm}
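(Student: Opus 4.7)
The plan is to derive the power sum expansion directly from the elementary-symmetric-function generating function of Corollary~\ref{circecor2} combined with Stembridge's formula~(\ref{stemeq}). Applying the involution $\omega$ to Corollary~\ref{circecor2} (which sends $E(z)$ to $H(z)$ and commutes with $\tfrac{\partial}{\partial t}$), we immediately obtain
\[
\sum_{n\ge 1}\omega\tilde W_n(\x,t)\,z^n \;=\; \frac{(1-t)\,\tfrac{\partial}{\partial t}H(tz)}{H(tz)-tH(z)},
\]
and the goal is to factor the right-hand side as a product of two pieces whose $[z^n]$ coefficients we can multiply and then reindex.

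The first piece comes from logarithmic differentiation: since $\log H(tz)=\sum_{k\ge 1}p_k(\x)(tz)^k/k$, differentiating in $t$ gives $\tfrac{\partial}{\partial t}H(tz)=H(tz)\sum_{k\ge 1}p_k(\x)\,t^{k-1}z^k$. The second piece comes from the purely algebraic identity
\[
\frac{(1-t)H(tz)}{H(tz)-tH(z)} \;=\; 1 + t\!\left(\frac{(1-t)H(z)}{H(tz)-tH(z)}-1\right),
\]
which is verified by clearing denominators. Substituting Stembridge's formula~(\ref{stemeq}) for the inner fraction yields
\[
\frac{(1-t)H(tz)}{H(tz)-tH(z)} \;=\; 1+t\sum_{m\ge 1}T_m(\x,t)\,z^m, \qquad T_m(\x,t):=\sum_{\mu\vdash m}A_{l(\mu)}(t)\prod_i[\mu_i]_t\,\frac{p_\mu}{z_\mu}.
\]
Multiplying the two pieces and extracting $[z^n]$ produces
\[
\omega\tilde W_n(\x,t) \;=\; t^{n-1}p_n(\x) \;+\; \sum_{k=1}^{n-1} t^{k}p_k(\x)\,T_{n-k}(\x,t).
\]

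The final step is a combinatorial reindexing of this double sum by the partition $\lambda:=(k)\cup\mu\vdash n$. For $\lambda$ of length $\ell\ge 2$, each distinct part size $k$ of $\lambda$ corresponds to a single pair $(k,\mu)$ with $\mu:=\lambda\setminus\{k\}$ and $l(\mu)=\ell-1$. Using $p_kp_\mu=p_\lambda$ together with $z_\mu=z_\lambda/(k\,m_k(\lambda))$ and $\prod_i[\mu_i]_t=\bigl(\prod_j[\lambda_j]_t\bigr)/[k]_t$, the contribution of such a pair to the coefficient of $p_\lambda/z_\lambda$ is $A_{\ell-1}(t)\,k\,m_k(\lambda)\,t^k\,\prod_j[\lambda_j]_t/[k]_t$. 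Summing over distinct part sizes $k$ and applying the identity $\sum_k m_k(\lambda)\,kt^k/[k]_t=\sum_{i=1}^{\ell}\lambda_i t^{\lambda_i}/[\lambda_i]_t$ converts this into $A_{\ell-1}(t)\sum_{i=1}^{\ell}\lambda_i t^{\lambda_i}\prod_{j\ne i}[\lambda_j]_t$, as desired. The isolated term $t^{n-1}p_n$ handles the one-row case $\lambda=(n)$ via the convention $A_0(t)=t^{-1}$. The main obstacle is this bookkeeping step: the multiplicity factor $m_k(\lambda)$ produced by the ratio $z_\lambda/z_\mu$ must cancel against the overcounting inherent in choosing which copy of $k$ in $\lambda$ to strip off, and this cancellation is exactly what converts the sum over distinct part sizes into the index sum $\sum_{i=1}^{\ell}$ appearing in the statement.
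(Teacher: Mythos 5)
Your proposal is correct, and it takes a genuinely different route from the paper's proof, although both start from the same reduction: applying $\omega$ to Corollary~\ref{circecor2} and showing that the claimed power sum coefficients have generating function $(1-t)\frac{\partial}{\partial t}H(tz)\,/\,(H(tz)-tH(z))$. From there the paper works from the coefficients toward the generating function: it rewrites $c_\lambda(t)$ using the classical identity (\ref{eulerdef}), $tA_{m-1}(t)/(1-t)^m=\sum_{k\ge 1}k^{m-1}t^k$, expresses the resulting sum through the series $U_k(\x,t,z)$ and its $t$-derivative via Lemma~\ref{stemlem} (the homomorphism computation of $(H(z)/H(tz))^k$), and then resums the geometric series in $k$. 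You instead work from the generating function toward the coefficients: you factor the right-hand side as $\bigl(\sum_{k\ge 1}p_k(\x)t^{k-1}z^k\bigr)\cdot\frac{(1-t)H(tz)}{H(tz)-tH(z)}$ via logarithmic differentiation (the same observation as (\ref{firstsumeq}) in the paper), convert the second factor to $1+t\bigl(\frac{(1-t)H(z)}{H(tz)-tH(z)}-1\bigr)$ by a one-line denominator-clearing identity, quote Stembridge's formula (\ref{stemeq}) as a black box, and then extract $[z^n]$ by a convolution that you reindex over partitions $\lambda=(k)\cup\mu$. The bookkeeping in that last step is handled correctly: the factor $k\,m_k(\lambda)$ coming from $z_\mu=z_\lambda/(k\,m_k(\lambda))$ is exactly what turns the sum over distinct part sizes into $\sum_{i=1}^{l(\lambda)}\lambda_i t^{\lambda_i}/[\lambda_i]_t$, and the isolated term $t^{n-1}p_n$ matches the convention $A_0(t)=t^{-1}$ for $\lambda=(n)$. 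What your approach buys is economy: it bypasses the identity (\ref{eulerdef}) and the $\partial_t U_k$ manipulations entirely, at the cost of using the finished statement (\ref{stemeq}) rather than only its proof ingredient Lemma~\ref{stemlem}, and of a partition-multiplicity argument in place of the paper's purely generating-function computation.
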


We will need the following Lemma, which is implicit in the proof of (\ref{stemeq}) in \cite{stem1}.  We include the proof for the sake of completeness.
\begin{lemma} \label{stemlem} For all $k \ge 1$,
\bq \label{Hfraceq} \left ( \frac{H(z)}{H(tz)}\right )^{k}= 1+\sum_{n\ge 1} \sum_{\lambda \vdash n} \left (k^{l(\lambda)} \prod_{i=1}^{l(\lambda)} (1-t^{\lambda_i}) \right) \frac{p_\lambda(\x)}{z_\lambda} z^n .\eq
\end{lemma}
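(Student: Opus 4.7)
The plan is to derive the identity by taking the logarithm, manipulating the resulting power-sum series, and exponentiating using the standard expansion that converts generating functions in $p_n z^n / n$ into sums over partitions weighted by $1/z_\lambda$.

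First I would recall the classical identity
\[
\log H(z) = \sum_{n \ge 1} p_n(\x)\,\frac{z^n}{n},
\]
which is just the exponential expression of the complete homogeneous symmetric functions in terms of the power sums. Substituting $tz$ for $z$ and subtracting gives
\[
\log\!\left(\frac{H(z)}{H(tz)}\right) = \sum_{n \ge 1} (1-t^n)\,p_n(\x)\,\frac{z^n}{n},
\]
and multiplying by $k$ yields
\[
k\log\!\left(\frac{H(z)}{H(tz)}\right) = \sum_{n \ge 1} k(1-t^n)\,p_n(\x)\,\frac{z^n}{n}.
\]

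Next I would apply the general identity
\[
\exp\!\left(\sum_{n\ge 1} b_n \frac{z^n}{n}\right) = \sum_{\lambda} \frac{b_\lambda}{z_\lambda}\, z^{|\lambda|},
\]
where the sum is over all partitions (including the empty one, contributing $1$), and $b_\lambda := \prod_i b_{\lambda_i}$. This identity follows from expanding $\exp$ as a power series and counting, for each partition $\lambda$ with $m_i$ parts of size $i$, the $\ell(\lambda)!/\prod_i m_i!$ ordered tuples that give rise to $\lambda$; the factor $1/\ell(\lambda)!$ from the exponential combines with $1/\prod_i i^{m_i}$ to produce exactly $1/z_\lambda$. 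Setting $b_n = k(1-t^n)p_n(\x)$ gives $b_\lambda = k^{\ell(\lambda)}\bigl(\prod_{i=1}^{\ell(\lambda)}(1-t^{\lambda_i})\bigr)\,p_\lambda(\x)$, so exponentiating the previous display produces
\[
\left(\frac{H(z)}{H(tz)}\right)^{\!k} = \sum_{\lambda} \frac{k^{\ell(\lambda)}\prod_{i=1}^{\ell(\lambda)}(1-t^{\lambda_i})}{z_\lambda}\, p_\lambda(\x)\, z^{|\lambda|}.
\]
Splitting off the empty partition (which contributes $1$) and grouping the remaining terms by $n = |\lambda|$ yields (\ref{Hfraceq}).

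There is no real obstacle here: the argument is a straightforward three-line manipulation once one has the logarithmic identity for $H(z)$ and the exponential formula for power sums, both of which are standard. The only point that warrants care is the bookkeeping in the exponential formula (the $z_\lambda$ denominator), but this is essentially the combinatorial identity $\ell(\lambda)!/\prod_i m_i! \cdot 1/\prod_i i^{m_i} \cdot 1/\ell(\lambda)! = 1/z_\lambda$, which ensures that the sum over ordered tuples reorganizes cleanly into a sum over partitions.
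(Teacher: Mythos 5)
Your proof is correct and is essentially the same argument as the paper's: the paper applies the algebra homomorphism $p_r \mapsto k(1-t^r)p_r$ to the two standard expressions $H(z)=1+\sum_{n}\sum_{\lambda\vdash n}\frac{p_\lambda}{z_\lambda}z^n$ and $H(z)=\exp\bigl(\sum_{r\ge1}\frac{p_r}{r}z^r\bigr)$, which is exactly your log--substitute--exponentiate computation with the $1/z_\lambda$ bookkeeping. No gaps.
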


\begin{proof} For each $k \ge 1$, let $\varphi_k:\Lambda_\Q[z] \to \Lambda_\Q[t,z] $ be the algebra homomorphism determined  by
$$\varphi_k(p_r(\x) ) = k(1-t^r) p_r(\x) $$ for all $ r \ge 1$.  
Since $H(z) = 1+\sum_{n\ge 1}\sum_{\lambda \vdash n} \frac{p_\lambda}{z_\lambda} z^n$,
$$\varphi_k(H(z)) =1+ \sum_{n\ge 1}\sum_{\lambda \vdash n} \left (k^{l(\lambda)}\prod_{i=1}^{l(\lambda)} (1-t^{\lambda_i}) \right )\frac{p_\lambda}{z_\lambda} z^n.$$

To complete the proof  we show that $\varphi_k(H(z)) $ is equal to the left hand side of (\ref{Hfraceq}). We use the fact that
\bq \label{hpeq} H(z) = \exp(\sum_{r \ge 1} \frac{p_r(\x)}r z^r)\eq  to obtain
\begin{eqnarray*} \varphi_k(H(z)) &=& \exp\left( \sum_{r \ge 1} \frac{\varphi_k(p_r)}{r} z^r\right)
\\ &=& \exp \left( \sum_{r \ge 1} \frac{k(1-t^r) p_r}{r} z^r\right)
\\ &=& \left (\frac{\exp \left( \sum_{r \ge 1} \frac{ p_r}{r} z^r\right)}{ \exp \left( \sum_{r \ge 1} t^r \frac{p_r}{r} z^r\right)}\right )^k
\\&=& \left ( \frac{H(z)}{H(tz)} \right )^k.
\end{eqnarray*}
\end{proof}

\begin{proof}[Proof of Theorem~\ref{pcircth}] For each $\lambda \vdash n$, set $$c_\lambda(t) := A_{l(\lambda)-1}(t) \sum_{i=1}^{l(\lambda)} \lambda_i t^{\lambda_i} \prod_{j \in [l(\lambda)]\setminus\{i\}} [\lambda_j]_t.$$  We will prove that
\bq \label{equiv42eq} \sum_{n\ge 1} \sum_{\lambda \vdash n} c_\lambda(t) \frac{p_\lambda(\x)}{z_\lambda} z^n =  \frac{(1-t)\frac{\partial} {\partial t} H(tz)}{H(tz)-tH(z)},\eq which by Corollary~\ref{circecor2} is equivalent to (\ref{pcirceq}).

We have 
\begin{eqnarray*} c_\lambda(t) &=& (t-1) \frac{A_{l(\lambda)-1}(t)}{(t-1)^{l(\lambda)}} \sum_{i=1}^{l(\lambda)} \lambda_i t^{\lambda_i} \prod_{j \in [l(\lambda)]\setminus\{i\}} (t^{\lambda_j}-1) 
\\ &=& (t-1) (-1)^{l(\lambda)} \frac{A_{l(\lambda)-1}(t)}{(1-t)^{l(\lambda)}} \, t \,\frac{d}{dt}\left ( \prod_{j=1}^{l(\lambda)} (t^{\lambda_j}-1) \right) \\
&=& (t-1)  \frac{tA_{l(\lambda)-1}(t)}{(1-t)^{l(\lambda)}} \, \frac{d}{dt}\left ( \prod_{j=1}^{l(\lambda)} (1-t^{\lambda_j}) \right).
\end{eqnarray*}

For the case $l(\lambda)=1$, we have
$$c_{(n)}(t)= n t^{n-1}.$$
For the case $l(\lambda)>1$, we use the classical identity\footnote{This is Euler's original definition of Eulerian polynomial.} (see \cite[Proposition 1.4.4 and equation (1.36)]{st5}),  
\bq \label{eulerdef} \frac {t A_{m-1}(t) } {(1-t)^m} =\sum_{k \ge 1} k^{m-1} t^{k},\eq
for all $m > 1$.  This yields
\begin{eqnarray*} c_\lambda(t) &=& (t-1) \sum_{k \ge 1} k^{l(\lambda)-1} t^{k} \,\frac{d}{dt}\left ( \prod_{j=1}^{l(\lambda)} (1-t^{\lambda_j}) \right) 
\\ &=& (t-1) \sum_{k \ge 1}  \frac{t^{k}}{k} \,\frac{d}{dt}\left ( k^{l(\lambda)} \prod_{j=1}^{l(\lambda)} (1-t^{\lambda_j}) \right). 
\end{eqnarray*}
It follows  that
\begin{equation} 
\label{pexpeq} 
\sum_{n\ge 1} \sum_{\lambda \vdash n}  c_\lambda(t) \frac{p_\lambda(\x)}{z_\lambda} z^n 
=  \sum_{n \ge 1} nt^{n-1} \frac{p_{n}(\x)}{n} z^n 
+ (t-1) \sum_{k\ge 1} \frac{t^{k}}{k} \frac{\partial}{\partial t} U_k(\x,t,z)\end{equation}
where 
$$U_k(\x,t,z):=\sum_{n\ge 1} \sum_{\scriptsize\begin{array}{c} \lambda \vdash n \\ l(\lambda) >1 \end{array}}  k^{l(\lambda)} \prod_{j=1}^{l(\lambda)} (1-t^{\lambda_j}) \, \frac{p_\lambda(\x)}{z_\lambda} z^n . $$

Note that  the first summation on the right hand side of (\ref{pexpeq}) can be expressed as \begin{eqnarray}\nonumber \sum_{n \ge 1} nt^{n-1} \frac{p_{n}}{n} z^n &=&\frac{\partial}{\partial t}  \sum_{n \ge 1} t^n \frac{p_{n}}{n} z^n 
\\ \nonumber &=&\frac{\partial}{\partial t} \ln H(tz)
\\ \label{firstsumeq} &=& \frac{\frac{\partial }{dt} (H(tz))}{H(tz)}
\end{eqnarray}
with the second equality following from (\ref{hpeq}).

To evaluate the second summation on the right hand side of (\ref{pexpeq}), we use  Lemma~\ref{stemlem} to obtain
\begin{eqnarray*} U_k(\x,t,z) &=& \sum_{n\ge 1} \sum_{ \lambda \vdash n}  k^{l(\lambda)} \prod_{j=1}^{l(\lambda)} (1-t^{\lambda_j}) \frac{p_\lambda(\x)}{z_\lambda} z^n - \sum_{n\ge 1} k(1-t^n)  \frac{p_n(\x)}{n} z^n 
\\ &=& \left (\frac{H(z)}{H(tz)} \right  )^{k}-1- k\sum_{n\ge 1} (1-t^n)  \frac{p_n(\x)}{n} z^n  . \end{eqnarray*} 
By (\ref{firstsumeq}),
\begin{eqnarray*}\frac{\partial}{\partial t} U_k(\x,t,z) &=& k  \left (\frac{H(z)}{H(tz)} \right  )^{k-1} \frac{\partial }{\partial t} \left (\frac{H(z)}{H(tz)} \right )  + k \sum_{n\ge 1} t^{n-1}  p_n(\x) z^n
\\ &=& -k \left ( \left (\frac{H(z)}{H(tz)} \right )^{k} \frac{\frac{\partial }{\partial t} (H(tz))}{H(tz)} 
-\frac{\frac{\partial }{\partial t} (H(tz))}{H(tz)}
\right ) 
\\&=& - k\left ( \left (\frac{H(z)}{H(tz)} \right )^{k}-1\right ) \frac{\frac{\partial }{\partial t} (H(tz))}{H(tz)} .
\end{eqnarray*}
Hence the second summation is 
$$
-\frac{\frac{\partial }{\partial t} (H(tz))}{H(tz)} \sum_{k\ge 1} t^{k}\left (\left (\frac{H(z)}{H(tz)} \right )^{k}-1 \right )
.$$
Plugging this and (\ref{firstsumeq}) into (\ref{pexpeq}) yields
\begin{eqnarray*} \sum_{n\ge 1} \sum_{\lambda \vdash n}  c_\lambda(t) \frac{p_\lambda(\x)}{z_\lambda} z^n  &=&\frac{\frac{\partial }{\partial t} (H(tz))}{H(tz)} 
\\ & &+(1-t) \frac{\frac{\partial }{\partial t} (H(tz))}{H(tz)} \sum_{k\ge 1} t^{k}\left (\left (\frac{H(z)}{H(tz)} \right )^{k}-1 \right )
\\ &=&  \frac{\frac{\partial }{\partial t} (H(tz))}{H(tz)} \left (1+ (1-t) \sum_{k\ge 1} t^{k} \left (\left (\frac{H(z)}{H(tz)} \right )^{k}-1 \right ) \right ) \end{eqnarray*}

 \vspace*{-.7in}\begin{eqnarray*} \phantom {\sum_{n\ge 1} \sum_{\lambda \vdash n}  c_\lambda(t) \frac{p_\lambda(\x)}{z_\lambda} z^n}  &\phantom{=}& \phantom{\frac{\frac{\partial }{dt} (H(tz))}{H(tz)}}
\\
 &=&  \frac{\frac{\partial }{dt} (H(tz))}{H(tz)} \left ( 1- (1-t) \sum_{k \ge 0} t^{k} + (1-t) \sum_{k\ge 0} t^{k} \left (\frac{H(z)}{H(tz)} \right )^{k}  \right )
\\& =& \frac{\frac{\partial }{\partial t} (H(tz))}{H(tz)} (1-t) \frac 1 {1- t\frac{H(z)}{H(tz)}}
\\ &=&  \frac{(1-t)\frac{\partial} {\partial t} H(tz)}{H(tz)-tH(z)},
 \end{eqnarray*}
 which establishes (\ref{equiv42eq}).
\end{proof}

The following result expresses the coefficients of $\frac{p_\lambda(\x)}{z_\lambda}$ in the power sum expansion of $W^{<}_n(\x,t)$ as a polynomial in $t$ with positive integer coefficients.

\begin{thm} \label{powlesth} For all $n \ge 1$,
$$\omega W_n^{<}(\x,t) =  \sum_{\lambda \vdash n}\frac{d }{dt} \left(  tA_{l(\lambda)-1}(t) \prod_{i=1}^{l(\lambda)}[\lambda_i]_t \right)\frac{p_\lambda(\x)}{z_\lambda}. $$
\end{thm}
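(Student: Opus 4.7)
The plan is to express $\omega W_n^<(\x,t)$ as a simple combination of the two already-established power sum expansions, namely (\ref{powereq}) for $\omega W_n(\x,t)$ and (\ref{pcirceq}) for $\omega \tilde W_n(\x,t)$.  Subtracting (\ref{refineeq1}) from (\ref{refineeq2}) gives the identity
\[
\tilde W_n(\x,t) - W_n(\x,t) = (t-1)\,W_n^<(\x,t),
\]
so applying the involution $\omega$ (which acts as the identity on $t$) yields
\[
\omega W_n^<(\x,t) = \frac{\omega \tilde W_n(\x,t) - \omega W_n(\x,t)}{t-1}.
\]
Reading off the coefficient of $\frac{p_\lambda(\x)}{z_\lambda}$ and setting $k := l(\lambda)$, the theorem is equivalent to the purely univariate identity
\[
(t-1)\,\frac{d}{dt}\!\Bigl(tA_{k-1}(t)\prod_{i=1}^{k}[\lambda_i]_t\Bigr) \;=\; A_{k-1}(t)\sum_{i=1}^{k} \lambda_i t^{\lambda_i}\!\!\prod_{\substack{j\in[k]\\ j\ne i}}[\lambda_j]_t \;-\; A_k(t)\prod_{i=1}^{k}[\lambda_i]_t.
\]

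To establish this identity I would differentiate $(t-1)[m]_t = t^m-1$ to obtain the elementary relation $(t-1)\,\frac{d}{dt}[m]_t = m\,t^{m-1} - [m]_t$, apply the product rule on the left hand side, and then invoke this relation one factor at a time.  The contribution of differentiating $\prod_i [\lambda_i]_t$ to $(t-1)f'(t)$ becomes exactly
\[
A_{k-1}(t)\sum_{i=1}^{k} \lambda_i t^{\lambda_i}\!\!\prod_{j\ne i}[\lambda_j]_t \;-\; kt\,A_{k-1}(t)\prod_{i=1}^{k}[\lambda_i]_t,
\]
the first summand of which cancels exactly against the matching sum on the right hand side.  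After this cancellation, the identity collapses to
\[
A_k(t) \;=\; \bigl(1+(k-1)t\bigr)A_{k-1}(t) \;+\; t(1-t)\,A_{k-1}'(t),
\]
which is the classical differential recurrence for the Eulerian polynomials; a direct appeal to this standard fact finishes the verification.

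The only delicate point is the edge case $k=1$, where $A_{k-1}(t) = A_0(t) = t^{-1}$ is a formal rational function rather than a polynomial.  Here the identity reduces to $(t-1)\frac{d}{dt}[n]_t = n\,t^{n-1} - [n]_t$, which is immediate, and one may check that the Eulerian recurrence continues to hold at $k=1$ under the convention $A_0(t)=t^{-1}$ (giving $A_1(t)=1$), so the same argument applies uniformly.  I expect the main bookkeeping obstacle to be nothing more than tracking the $t$-derivative of a product of $k+1$ factors and matching the resulting terms against the sum $\sum_i \lambda_i t^{\lambda_i}\prod_{j\ne i}[\lambda_j]_t$ appearing in (\ref{pcirceq}); once this is done, the theorem follows cleanly from a classical recurrence, and no further symmetric function machinery is required.
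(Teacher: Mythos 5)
Your proposal is correct and follows essentially the same route as the paper: both extract $\omega W_n^{<}(\x,t)=(t-1)^{-1}(\omega\tilde W_n(\x,t)-\omega W_n(\x,t))$ from (\ref{refineeq1})--(\ref{refineeq2}), plug in the power sum expansions (\ref{powereq}) and (\ref{pcirceq}), and close the resulting coefficient identity with an Eulerian polynomial fact. The only difference is cosmetic: the paper recognizes the two terms as a total derivative using $\frac{d}{dt}\bigl(tA_{m-1}(t)/(1-t)^{m}\bigr)=A_m(t)/(1-t)^{m+1}$ (from (\ref{eulerdef})), whereas you expand the derivative forward and invoke the equivalent classical differential recurrence $A_k(t)=(1+(k-1)t)A_{k-1}(t)+t(1-t)A_{k-1}'(t)$, handling $k=1$ with the convention $A_0(t)=t^{-1}$ just as the paper does.
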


\begin{proof} Let $c^{<}_\lambda(t)$ be the coefficient of $ (z_\lambda)^{-1}p_\lambda(\x)$ in $\omega W^{<}_n(\x,t)$.  By (\ref{refineeq1}) and (\ref{refineeq2}),  $\omega W_n^{<}(\x,t)=(t-1)^{-1} (\omega \tilde W_n(\x,t) -\omega W_n(\x,t))$.  Hence from~(\ref{pcirceq}) and~(\ref{powereq}),  we obtain,
\begin{eqnarray*}c^{<}_\lambda(t) &=& 
 (t-1)^{-1} \left (A_{l(\lambda)-1}(t)  \sum_{i=1}^{l(\lambda)}  \lambda_i t^{\lambda_i} \prod_{j \in [\lambda(l)]\setminus \{i\}} [\lambda_j]_t  
 - A_{l(\lambda)}(t) \prod_{ i=1}^{l(\lambda)} [\lambda_i]_t \right)
 \\ &=&   
  \frac{A_{l(\lambda)-1}(t)} {(t-1)^{l(\lambda)}}  \sum_{i=1}^{l(\lambda)}  \lambda_i t^{\lambda_i} \prod_{j \in [\lambda(l)]\setminus \{i\}} (t^{\lambda_j}-1 ) -\frac{A_{l(\lambda)}(t)} {(t-1)^{l(\lambda)+1}} \prod_{ i=1}^{l(\lambda)}(t^{\lambda_i}-1)
 \\ &=& \frac{tA_{l(\lambda)-1}(t)} {(t-1)^{l(\lambda)}} \,\,\frac{d}{dt} (\prod_{ i=1}^{l(\lambda)}(t^{\lambda_i}-1))  - \frac{A_{l(\lambda)}(t)} {(t-1)^{l(\lambda)+1}} \prod_{ i=1}^{l(\lambda)}(t^{\lambda_i}-1) . 
 \end{eqnarray*}
 From (\ref{eulerdef}), one can see that
 $$\frac{d}{dt} \left (\frac{tA_{l(\lambda)-1}(t)} {(1-t)^{l(\lambda)}}\right ) = \frac{A_{l(\lambda)}(t)} {(1-t)^{l(\lambda)+1}} .$$
 Hence, 
 \begin{eqnarray*} c^{<}_{\lambda}(t) &=& \frac{tA_{l(\lambda)-1}(t)} {(t-1)^{l(\lambda)}} \,\,\frac{d}{dt} (\prod_{ i=1}^{l(\lambda)}(t^{\lambda_i}-1))  + \frac{d}{dt}\left(\frac{tA_{l(\lambda)-1}(t)} {(t-1)^{l(\lambda)}} \right ) \prod_{ i=1}^{l(\lambda)}(t^{\lambda_i}-1)  
 \\ &=& \frac{d}{dt}\left(\frac{tA_{l(\lambda)-1}(t)} {(t-1)^{l(\lambda)}}  \prod_{ i=1}^{l(\lambda)}(t^{\lambda_i}-1) \right )
 \\ &=& \frac{d }{dt} \left(  tA_{l(\lambda)-1}(t) \prod_{i=1}^{l(\lambda)}[\lambda_i]_t \right).
 \end{eqnarray*}
\end{proof}

\begin{cor} \label{powlesgrcor} For $\lambda \vdash n$,  let $c^{<}_\lambda(t)$ be the coefficient  of $z_\lambda^{-1}p_\lambda(\x)$ in the power sum expansion of $\omega W_n^{<}(\x,t)$ and $c^{>}_\lambda(t)$ be the coefficient  of $z_\lambda^{-1}p_\lambda(\x)$ in the power sum expansion of $\omega W_n^{>}(\x,t)$.  
 If  $(a_0,a_1,\dots, a_{n-1})$ satisfies
\bq \label{adef} tA_{l(\lambda)-1}(t) \prod_{i=1}^{l(\lambda)}[\lambda_i]_t = \sum_{i=0}^{n-1} a_i t^i\eq
then $$c^{<}_\lambda(t) = \sum_{i=0}^{n-2} (i+1)a_{i+1} t^i \,\,\mbox{ and } \,\,
c^{>}_\lambda(t) = \sum_{i=1}^{n-1} (n-i) a_{n-i} t^i.$$
Consequently for $\lambda = (n)$,
\bq \label{npowerlesgreq} c^{<}_{(n)}(t) = \sum_{i=0}^{n-2} (i+1) t^i \,\,\mbox { and }\,\, c^{>}_{(n)}(t) = \sum_{i=1}^{n-1} (n-i) t^i.\eq 
\end{cor}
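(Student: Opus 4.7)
The plan is to read off the formula for $c^<_\lambda(t)$ directly from Theorem~\ref{powlesth} and then obtain the formula for $c^>_\lambda(t)$ by invoking the reversal identity $W^>_n(\x,t) = t^{n-1}W^<_n(\x,t^{-1})$ that was established in the proof of~(\ref{greateq}). Since $\omega$ acts only on the $\x$ variables, this identity transfers verbatim to $\omega W^<_n$ and $\omega W^>_n$, and in particular passes to the coefficients of $z_\lambda^{-1}p_\lambda(\x)$, giving
$$c^>_\lambda(t) \;=\; t^{n-1}c^<_\lambda(t^{-1}).$$
No new combinatorics is required beyond this.

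Step 1 is to take the formula
$$\omega W_n^{<}(\x,t) = \sum_{\lambda \vdash n}\frac{d}{dt}\!\left(tA_{l(\lambda)-1}(t)\prod_{i=1}^{l(\lambda)}[\lambda_i]_t\right)\frac{p_\lambda(\x)}{z_\lambda}$$
from Theorem~\ref{powlesth}, substitute $tA_{l(\lambda)-1}(t)\prod_{i=1}^{l(\lambda)}[\lambda_i]_t = \sum_{i=0}^{n-1}a_it^i$, and differentiate term by term. This yields $\sum_{i=1}^{n-1} i\,a_i\, t^{i-1} = \sum_{i=0}^{n-2}(i+1)a_{i+1}t^i$, which is exactly the claimed expression for $c^<_\lambda(t)$.

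Step 2 is to plug $t^{-1}$ into this expression, multiply by $t^{n-1}$, and re-index. Starting from $c^<_\lambda(t^{-1}) = \sum_{i=0}^{n-2}(i+1)a_{i+1}t^{-i}$, we get
$$c^>_\lambda(t) = t^{n-1}\sum_{i=0}^{n-2}(i+1)a_{i+1}t^{-i} = \sum_{i=0}^{n-2}(i+1)a_{i+1}t^{n-1-i},$$
and the substitution $j = n-1-i$ (equivalently $i = n-1-j$, with $i+1 = n-j$ and $a_{i+1} = a_{n-j}$) converts this into $\sum_{j=1}^{n-1}(n-j)a_{n-j}t^j$, as desired.

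Step 3 is the specialization $\lambda = (n)$. Here $l(\lambda)=1$ and, by the convention $A_0(t):= t^{-1}$ adopted at the start of this section, $tA_0(t)[n]_t = [n]_t = \sum_{i=0}^{n-1}t^i$, so $a_i = 1$ for every $i$. Substituting this into the general formulas yields~(\ref{npowerlesgreq}) directly. I anticipate no real obstacle; the only thing to be careful about is the index shift in Step~2 and the nonstandard convention $A_0(t)=t^{-1}$, which is what makes the $\lambda=(n)$ case fall out cleanly from the general formula rather than requiring a separate argument.
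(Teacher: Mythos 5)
Your proposal is correct and follows the same route as the paper, whose proof consists precisely of invoking the reversal identity $W^{>}_n(\x,t)=t^{n-1}W^{<}_n(\x,t^{-1})$ on top of Theorem~\ref{powlesth}; your index shifts and the treatment of the $\lambda=(n)$ case via the convention $A_0(t)=t^{-1}$ are all accurate. You have simply written out the details the paper leaves implicit.
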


\begin{proof}We use the fact that $W^{>}_n(\x,t) = t^{n-1} W^{<}_n(\x,t^{-1})$.
\end{proof}

One can use Corollary~\ref{powlesgrcor} to expand the other Smirnov word enumerators in the power sum basis. For instance, one can recover the expansion given in (\ref{powercycneeq}), which we restate now.

\begin{cor}[Ellzey \cite{e1}] \label{Ellth} For $\lambda \vdash n$,  let $\tilde c^{\ne}_\lambda(t)$ be the coefficient  of $z_\lambda^{-1}p_\lambda(\x)$ in the power sum expansion of $\omega \tilde W_n^{\ne}(\x,t)$.  Then $$\tilde c^{\ne}_\lambda(t) = \begin{cases} nt  A_{l(\lambda)-1}(t) \prod_{i=1}^{l(\lambda)} [\lambda_i]_t &\mbox{ if } l(\lambda) >1
\\ nt[n-1]_t &\mbox{ if } l(\lambda) =1 .\end{cases} $$
\end{cor}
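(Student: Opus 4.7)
The plan is to derive this corollary from Corollary~\ref{powlesgrcor} together with the decomposition (\ref{refineeq4}), which gives $\tilde c^{\ne}_\lambda(t) = t c^{<}_\lambda(t) + c^{>}_\lambda(t)$. Following the hint after the statement of Corollary~\ref{powlesgrcor}, I would split the argument into two cases depending on whether $l(\lambda)=1$ or $l(\lambda)\ge 2$.

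For the case $l(\lambda) = 1$, i.e.\ $\lambda = (n)$, I would just plug the explicit formulas (\ref{npowerlesgreq}) into $\tilde c^{\ne}_{(n)}(t) = tc^{<}_{(n)}(t) + c^{>}_{(n)}(t)$, obtaining
\[
\sum_{j=1}^{n-1} j\,t^j \;+\; \sum_{i=1}^{n-1}(n-i)\,t^i \;=\; n\sum_{i=1}^{n-1} t^i \;=\; nt[n-1]_t,
\]
which matches the claim (and, by convention, agrees with $nt A_0(t)[n]_t$ once one uses $A_0(t)=t^{-1}$).

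For the case $l(\lambda)\ge 2$, the central observation is palindromicity of
\[
f_\lambda(t) \;:=\; tA_{l(\lambda)-1}(t) \prod_{i=1}^{l(\lambda)}[\lambda_i]_t.
\]
Since the Eulerian polynomial $A_{l(\lambda)-1}(t)$ is palindromic of degree $l(\lambda)-2$ with center of symmetry $(l(\lambda)-2)/2$, and each factor $[\lambda_i]_t$ is palindromic of degree $\lambda_i-1$ with center $(\lambda_i-1)/2$, their product is palindromic of degree $n-2$ with center $(n-2)/2$. Multiplying by $t$ then gives a palindrome $f_\lambda(t)=\sum_{i=0}^{n-1}a_i t^i$ with $a_0=0=a_n$ and $a_i = a_{n-i}$ for all $i$. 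With these $a_i$ as in (\ref{adef}), Corollary~\ref{powlesgrcor} yields
\[
t c^{<}_\lambda(t) + c^{>}_\lambda(t) \;=\; \sum_{j=1}^{n-1} j\,a_j\,t^j + \sum_{i=1}^{n-1}(n-i)\,a_{n-i}\,t^i
\;=\; \sum_{i=1}^{n-1}\bigl(i+(n-i)\bigr) a_i\,t^i \;=\; n\,f_\lambda(t),
\]
where the middle equality uses the palindromic identity $a_{n-i}=a_i$. This is precisely the claim for $l(\lambda)\ge 2$.

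The only delicate step is the palindromicity bookkeeping: one has to keep track of the shift in the center of symmetry caused by the extra factor of $t$, and one has to treat $\lambda = (n)$ separately since for that partition $A_0(t)=t^{-1}$ makes $tA_0(t)=1$ behave differently from the genuine Eulerian polynomials $A_{m}(t)$ with $m\ge 1$. Once both cases are handled as above, the two formulas in the statement follow directly.
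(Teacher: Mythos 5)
Your argument is correct and is essentially the paper's own proof: both write $\tilde c^{\ne}_\lambda(t)=tc^{<}_\lambda(t)+c^{>}_\lambda(t)$, apply Corollary~\ref{powlesgrcor}, and use palindromicity of $tA_{l(\lambda)-1}(t)\prod_{i=1}^{l(\lambda)}[\lambda_i]_t$ (i.e.\ $a_i=a_{n-i}$ with $a_0=0$) to produce the factor $n$ when $l(\lambda)>1$, with $\lambda=(n)$ handled directly from (\ref{npowerlesgreq}). One parenthetical aside is wrong, though harmless: $ntA_0(t)[n]_t=n[n]_t\neq nt[n-1]_t$, so the $l(\lambda)=1$ answer does not agree with the generic formula under the convention $A_0(t)=t^{-1}$, which is precisely why the statement lists that case separately.
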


\begin{proof} We have 
\begin{eqnarray*} \tilde c^{\ne}_{\lambda}(t) &=& tc^{<}_{\lambda}(t) + c^{>}_{\lambda}(t)
\\&=&\sum_{i=1}^{n-1} i a_i  t^i + \sum_{i=1}^{n-1} (n-i) a_{n-i} t^i ,
\end{eqnarray*}
where the $a_i$ are as in (\ref{adef}).

Now let $l(\lambda) >1$.  We claim that $a_i = a_{n-i}$ for all $i \in [n-1]$.  Indeed,
it is well known that the Eulerian polynomials are palindromic and unimodal.   Clearly the same is true for each $[\lambda_i]_t$.  Since the product of palindromic, unimodal polynomials is palindromic, unimodal (see e.g. \cite[Proposition~B.1]{sw4}), $\sum_{i=0}^{n-1} a_i t^i$ is palindromic (and unimodal). Note that  $a_0 =0$ when $l(\lambda) >1$ and $a_1, a_{n-1} \ne 0$. Hence the claim holds.
It follows that 
\begin{eqnarray*} \tilde c^{\ne}_\lambda(t) &=& \sum_{i=1}^{n-1} i a_i  t^i + \sum_{i=1}^{n-1} (n-i) a_{i} t^i
\\&=& \sum_{i=1}^{n-1} n a_i t^i
\\ &=& n tA_{l(\lambda)-1}(t) \prod_{i=1}^{l(\lambda)}[\lambda_i]_t .
\end{eqnarray*}

The case $l(\lambda) =1$ follows immediately from (\ref{npowerlesgreq}).
\end{proof}

For the Smirnov word enumerator $W^{\ne}_n(\x,t)$ and the chromatic quasisymmetric function $X_{C_n}(\x,t)$, the formulas for the expansion coefficients in the power sum basis that follow from Corollay~\ref{powlesgrcor} do not seem to reduce to  simple  formulas  except when $\lambda = (n)$.  We have the following result in this case.

\begin{cor} \label{otherpowercor}The coefficient of   $n^{-1}p_n(\x)$ in the power sum expansion of $\omega W^{\ne}_n(\x,t)$ is
$ [n]_t + nt[n-2]_t$ and in the power sum expansion of $\omega X_{C_n}(\x,t)$ is 
$[2]_t[n]_t+ nt^2[n-3]_t$.
\end{cor}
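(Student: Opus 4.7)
The plan is to read off both coefficients directly from Corollary~\ref{powlesgrcor}, specifically from equation~(\ref{npowerlesgreq}), and then to confirm the two closed forms by a short polynomial identity check.

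First I would invoke the relations $W^{\ne}_n(\x,t) = W^<_n(\x,t) + W^>_n(\x,t)$ from (\ref{refineeq3}) and $X_{C_n}(\x,t) = W^<_n(\x,t) + tW^>_n(\x,t)$ from (\ref{labcycWeq}). Since $\omega$ is $\Q[t]$-linear, these linear combinations pass through $\omega$, and in particular the coefficient of $z_\lambda^{-1} p_\lambda(\x)$ on the left-hand side is obtained by applying the same linear combination to $c^<_\lambda(t)$ and $c^>_\lambda(t)$. Specializing to $\lambda = (n)$ and substituting the explicit formulas
\[ c^{<}_{(n)}(t) = \sum_{i=0}^{n-2} (i+1) t^i, \qquad c^{>}_{(n)}(t) = \sum_{i=1}^{n-1} (n-i) t^i \]
from (\ref{npowerlesgreq}), I would obtain closed sums for the two coefficients of interest.

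For $\omega W^{\ne}_n(\x,t)$, the coefficient of $n^{-1} p_n(\x)$ equals $c^{<}_{(n)}(t) + c^{>}_{(n)}(t)$. Collecting terms by powers of $t$ produces $1 + (n+1)(t + t^2 + \cdots + t^{n-2}) + t^{n-1}$; this matches $[n]_t + nt[n-2]_t$ since $[n]_t$ contributes $1 + t + \cdots + t^{n-1}$ and $nt[n-2]_t$ contributes $n(t + t^2 + \cdots + t^{n-2})$. For $\omega X_{C_n}(\x,t)$, the coefficient is $c^{<}_{(n)}(t) + tc^{>}_{(n)}(t)$. Collecting powers yields $1 + 2t + (n+2)(t^2 + t^3 + \cdots + t^{n-2}) + 2t^{n-1} + t^n$, which agrees with $[2]_t[n]_t + nt^2[n-3]_t$ upon noting that $[2]_t[n]_t = 1 + 2t + 2t^2 + \cdots + 2t^{n-1} + t^n$ and $nt^2[n-3]_t = n(t^2 + t^3 + \cdots + t^{n-2})$.

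Neither step presents a genuine obstacle: the corollary is almost a direct specialization of Corollary~\ref{powlesgrcor}. The only care required is the bookkeeping at the extremal degrees $0$, $1$, $n-1$, $n$, where some of the arithmetic progressions have missing terms; this is where a careless accounting would misidentify the constants. Accordingly, my write-up would simply display the two sums, align them degree-by-degree, and cite the targeted closed forms.
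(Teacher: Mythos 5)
Your proposal is correct and follows essentially the same route as the paper: the corollary is stated there precisely as the $\lambda=(n)$ specialization of Corollary~\ref{powlesgrcor}, combined linearly via $W^{\ne}_n = W^<_n + W^>_n$ and $X_{C_n} = W^<_n + tW^>_n$, and your degree-by-degree bookkeeping (including the extremal powers $t^0$, $t$, $t^{n-1}$, $t^n$) checks out against $[n]_t + nt[n-2]_t$ and $[2]_t[n]_t + nt^2[n-3]_t$. The paper's remark notes an alternative derivation from Corollaries~\ref{noteqcor} and~\ref{cyclecor} via the $h_n$-coefficient, but that is not the main argument and your proof does not need it.
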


\begin{remark} Corollary~\ref{otherpowercor} also follows from Corollaries~\ref{noteqcor} and~\ref{cyclecor} since
the coefficient of $h_n$ in the $h$-expansion of a symmetric function equals the coefficient of  $n^{-1} p_n$ in the power sum expansion.
\end{remark}

\section{Expansion in the fundamental quasisymmetric functions}

In \cite{sw4}, Shareshian and the second named author derive, for all labeled incomparability graphs, an expansion of the chromatic quasisymmetic function in  Gessel's basis of fundamental quasisymmetric functions, and the first named author does the same for 
all directed graphs in  \cite{e1}.  The expansion formula in \cite[Theorem 3.1]{sw4} applied to  $X_{P_n}(\x,t) = W_n(\x,t) $  is given in  (\ref{Feq}) 
below. (A different expansion formula is  obtained by applying the formula  in \cite{e1}.)  Here we give analogous 
expansions for $ W^<_n(\x,t)$,  $W^<_n(\x,t)$, and $\omega \tilde W_n(\x,t)$.  These expansions immediately yield 
expansion formulas for  the chromatic quasisymmetric functions  $X_{\overrightarrow C_n}(\x,t)$, and $X_{C_n}(\x,t)$, which are different  from the ones obtained by applying the formula in \cite{e1}.

For $n \ge 1$ and $S \subseteq [n-1]$, let $D(S)$ be the set of all functions $f:[n] \rightarrow \PP$ such that
\begin{itemize}
\item $f(i) \geq f(i+1)$ for all $i \in [n-1]$, and \item $f(i)>f(i+1)$ for all $i \in S$.
\end{itemize}
The  {\em fundamental quasisymmetric function} associated with $S \subseteq [n]$ is defined as\footnote{This is  nonstandard notation for Gessel's fundamental quasisymmetric function.  Our $F_{n,S}$ is equal to $L_{\alpha(S)}$ in \cite{st3}, where $\alpha(S)$ is the reverse of the composition associated with $S$.}
\[
F_{n,S}(\x):=\sum_{f \in D(S)}x_f,
\]
where $x_f := x_{f(1)}x_{f(2)} \cdots x_{f(n)}$.
In fact, 
the set $\{F_{n,S}:S \subseteq [n-1]\}$ is a basis for the vector space of homogeneous quasisymmetric functions of degree $n$  (see \cite[Proposition 7.19.1]{st3}).

For $\sigma \in \sg_n$, define 
$$\Des_{\ge 2}(\sigma) := \{i \in [n-1] : \sigma(i) -\sigma(i+1) \ge 2 \}$$
and 
$$\Asc_{\ge 2}(\sigma) := \{i \in [n-1] : \sigma(i+1) -\sigma(i) \ge 2 \}.$$

The expansion formula in \cite[Theorem 3.1]{sw4} applied to $X_{P_n}(\x,t) = W_n(\x,t) $ yields \bq \label{Feq} \omega W_n(\x,t) = \sum_{\sigma \in \sg_n} t^{\des(\sigma)}F_{n, \Des_{\geq 2}(\sigma^{-1})}(\x).\eq
Now we give analogous expansions. 

 \begin{thm}  \label{Fth} For all $n \ge 1$, 
\bq \label{Flesseq} \omega W^<_n({\bf x},t) = \sum_{\substack{\sigma \in \mathfrak{S}_n \\ \sigma(1)<\sigma(n)}} t^{\des(\sigma)}F_{n, \Des_{\geq 2}(\sigma^{-1})}(\x)\eq
\bq \label{Fgreateq}\omega W^>_n({\bf x},t) = \sum_{\substack{\sigma \in \mathfrak{S}_n \\ \sigma(1)>\sigma(n)}} t^{\des(\sigma)}F_{n, \Asc_{\geq 2}(\sigma^{-1})}(\x) .\eq
\end{thm}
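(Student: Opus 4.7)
The plan is to refine the standardization argument from \cite[Theorem~3.1]{sw4} that establishes (\ref{Feq}). In that proof, one associates to each Smirnov word $w\in W_n$ a permutation $\sigma = \operatorname{std}(w) \in \mathfrak{S}_n$, sums the monomials $x_w$ over each fiber of this map, and after applying $\omega$ recovers the fundamental quasisymmetric function expansion indexed by $\Des_{\ge 2}(\sigma^{-1})$. The proof is driven by the fact that $\des(w) = \des(\sigma)$ for Smirnov $w$ (since adjacent letters are distinct), together with a $P$-partition-style identity computing the per-fiber sum.

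To prove (\ref{Flesseq}), I would use the standardization convention in which, for a tie $w_i = w_j$ with $i<j$, the \emph{later} position $j$ receives the smaller rank. Under this convention, for a Smirnov word $w$ with standardization $\sigma$, one has $\sigma(1) < \sigma(n)$ if and only if $w_1 < w_n$ strictly: the case $w_1 = w_n$ now yields $\sigma(1) > \sigma(n)$ (since $n>1$ becomes the tie-breaking position), while the strict cases are unambiguous in either convention. The equality $\des(w)=\des(\sigma)$ persists, since no two adjacent letters are equal, so the $t$-tracking is unaffected. The Shareshian--Wachs per-fiber computation then applies with only cosmetic modification, and restricting the sum to permutations $\sigma$ with $\sigma(1)<\sigma(n)$ produces exactly $\omega W_n^<(\x,t)$ with each fiber contributing $t^{\des(\sigma)}F_{n,\Des_{\ge 2}(\sigma^{-1})}(\x)$.

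For (\ref{Fgreateq}), the cleanest route is via the reversal involution $w\mapsto w^{\mathrm{rev}} = w_n w_{n-1}\cdots w_1$, which gives $W_n^>(\x,t) = t^{n-1} W_n^<(\x,t^{-1})$ (the identity used in the proof of (\ref{greateq})). Applying $\omega$, substituting (\ref{Flesseq}), and re-indexing by $\sigma = \tau^{\mathrm{rev}}$ with $\sigma(i) = \tau(n+1-i)$, one finds $\tau(1)<\tau(n) \Leftrightarrow \sigma(1)>\sigma(n)$ and $\des(\tau) = (n-1) - \des(\sigma)$. Using $\sigma^{-1}(k) = n+1 - \tau^{-1}(k)$, a short direct computation gives
\[
\tau^{-1}(i) - \tau^{-1}(i+1) = \sigma^{-1}(i+1) - \sigma^{-1}(i),
\]
hence $\Des_{\ge 2}(\tau^{-1}) = \Asc_{\ge 2}(\sigma^{-1})$. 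Collecting the powers of $t$ yields exactly the right-hand side of (\ref{Fgreateq}).

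The principal technical obstacle is verifying that the Shareshian--Wachs per-fiber identity continues to produce $F_{n,\Des_{\ge 2}(\sigma^{-1})}$ under the later-position tiebreak convention. I expect this to be a direct re-check of their $P$-partition-style reasoning: the $\Des_{\ge 2}$ condition originates from the Smirnov constraint on \emph{adjacent} letters of $w$ (translated into a rank-gap condition on $\sigma^{-1}$), and this is independent of how ties between non-adjacent equal letters are resolved, so the same indexing set should emerge.
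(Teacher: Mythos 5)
Your route is genuinely different from the paper's. The paper proves (\ref{Flesseq}) by decomposing Smirnov words according to acyclic orientations of the labeled cycle $C_n$, passing to the posets $P_{\bar a}$ and $P$-partition theory, and then constructing a particular greedy decreasing labeling $\tilde\rho_{\bar a(\sigma)}$ for which $\Des(\tilde\rho_{\bar a(\sigma)}\sigma)=[n-1]\setminus\Asc_{\ge 2}(\sigma)$; the paper explicitly remarks that a standardization proof exists but defers it to a forthcoming paper, and that is the proof you are attempting. Your derivation of (\ref{Fgreateq}) from (\ref{Flesseq}) via the reversal involution is correct and is essentially identical to the paper's argument, including the identity $\Des_{\ge 2}(\tau^{-1})=\Asc_{\ge 2}(\sigma^{-1})$.

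For (\ref{Flesseq}), however, the step you defer as ``a direct re-check'' is where the proof breaks: the per-fiber identity you assert is false. With the later-position-gets-smaller-rank convention, the Smirnov words standardizing to $\sigma$ are exactly the words $g\circ\sigma$ with $g:[n]\to\PP$ weakly increasing and strictly increasing precisely on $[n-1]\setminus\Des_{\ge 2}(\sigma^{-1})$. (Two things need checking here that you do not address: for adjacent positions whose standardized values differ by at least $2$, the Smirnov condition is already forced by the standardization condition, while for a gap of exactly $1$ it adds one strictness requirement.) Since the paper's $F_{n,S}$ is built from weakly \emph{decreasing} maps, converting $g$ to a decreasing map reverses the index set, and the fiber equals $F_{n,[n-1]\setminus(n-\Des_{\ge 2}(\sigma^{-1}))}$, whose image under $\omega$ is $F_{n,\,n-\Des_{\ge 2}(\sigma^{-1})}$ rather than $F_{n,\Des_{\ge 2}(\sigma^{-1})}$. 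Concretely, for $n=3$ and $\sigma=231$ the Smirnov fiber is $\{w: w_3\le w_1<w_2\}$, which sums to $F_{3,\{1\}}$, so its $\omega$-image is $F_{3,\{2\}}$, whereas your claimed term is $F_{3,\Des_{\ge 2}(312)}=F_{3,\{1\}}$; these differ already in the coefficients of $x_1^2x_2$ and $x_1x_2^2$. The mismatch persists inside the restricted index set (e.g.\ $\sigma=2314$ has $\sigma(1)<\sigma(4)$ and $\Des_{\ge 2}(\sigma^{-1})=\{1\}\ne\{3\}=4-\Des_{\ge 2}(\sigma^{-1})$), so it cannot be ignored for the restricted sum. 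The proof is repairable: compose your standardization with reverse complementation $\sigma\mapsto w_0\sigma w_0$ (where $w_0(i)=n+1-i$), which converts $n-\Des_{\ge 2}(\sigma^{-1})$ into $\Des_{\ge 2}$ of the new permutation's inverse, preserves $\des$, and preserves the condition $\sigma(1)<\sigma(n)$. Identifying this reindexing and verifying its compatibility with the restriction is the actual content of the argument, not a cosmetic modification.
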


\begin{proof}[Proof of (\ref{Flesseq})]
The first part of the proof is similar to that of \cite[Theorem 3.1]{sw4} and \cite[Theorem 3.1]{e1}. The second part diverges somewhat from these proofs.

{\bf Part 1}:  Given an acyclic orientation $\bar a$ of  the labeled cycle $C_n$, let $E_{\bar a}(C_n)$ be the set of directed edges of $C_n$ under the orientation $\bar a$.  Let $AO^>_n$ be the set of acyclic orientations $\bar{a}$ of $C_n$ such that  
$(n,1) \in E_{\bar{a}}(C_n)$.  For each $\bar{a}\in AO^>_n$, let $W_{\bar{a}} \subseteq W_n$ be 
the set of Smirnov words  $w = w_1 w_2 \cdots w_n$ such that 
\begin{itemize}
\item $w_n < w_1$, 
\item $w_i < w_{i+1} $ if $(i,i+1) \in E_{\bar{a}}(C_n)$ and $i \in [n-1]$, 
\item $w_i > w_{i+1} $ if $(i+1,i) \in E_{\bar{a}}(C_n)$ and $i \in [n-1]$.  
\end{itemize}
Let $\asc(\bar a)$ be the number of edges of 
$E_{\bar{a}}(C_n)$ of the form $(i, i+1)$ for $i \in [n-1]$.  Then by reversing the Smirnov words, we can see that
\begin{equation} \label{eqn1}
W^<_n({\bf x},t) = \sum_{\substack {w \in W_n \\ w_1 > w_n}} x_w t^{\asc(w)} = \sum_{\bar a \in AO^>_n} t^{\asc(\bar a)} \sum_{w \in W_{\bar a}} {\bf x}_w,
\end{equation}
where $\asc(w) := |\{i \in [n-1] : w_i < w_{i+1} \}|$.

Now for each acyclic orientation $\bar a \in AO^>_n$, define a poset $P_{\bar{a}}$ on $[n]$ by letting $i<_{P_{\bar{a}}} j$ if $(i,j) \in E_{\bar a}(C_n)$ and taking the transitive closure of this relation.  Let us define a {\em labeling} of $P_{\bar{a}}$ to be a bijection from $P_{\bar{a}}$ to $[n].$  So a labeling is just a permutation in $\sg_n$.   A labeling $\rho$ is said to be decreasing if $ \rho(i) > \rho(j)$ for all $i <_{P_{\bar{a}}} j$.  For any labeling $\rho$ of $P_{\bar{a}}$, let $L( P_{\bar{a}},\rho)$ be the set of linear extensions of $P_{\bar{a}}$ with the labeling  $\rho$. 

Now fix a decreasing labeling  $\rho_{\bar{a}}$  of $P_{\bar{a}}$ for each $\bar a \in AO^>_n$.  For any subset $S \subseteq [n-1],$ define $n-S = \{i \mid n-i \in S\}.$  Then by the theory of P-partitions \cite[Corollary 7.19.5]{st3}, we have that 
\begin{equation}\label{eqn2}
\sum_{w \in W_{\bar{a}}} {\bf x}_w = \sum_{\sigma \in L(P_{\bar{a}}, \rho_{\bar{a}})} F_{n, n-\Des(\sigma)},
\end{equation}
where $\Des(\sigma)$ is the usual descent set of a permutation, i.e. $\Des(\sigma) = \{ i \in [n-1] : \sigma(i)>\sigma(i+1)\}.$

Let $e:P_{\bar{a}} \rightarrow [n]$ be the identity labeling of $P_{\bar{a}},$ and hence $L(P_{\bar{a}}, e)$ is the set of linear extensions of $P_{\bar{a}}$ with its original labeling.  Note that $\sigma \in L(P_{\bar{a}}, e)$ if and only if $\rho_{\bar{a}}\sigma \in L(P_{\bar{a}}, \rho_{\bar{a}}),$ where $\rho_{\bar{a}}\sigma$ denotes the product of $\rho_{\bar{a}}$ and $\sigma$ in $\mathfrak{S}_n.$  Hence from (\ref{eqn2}), we have 
\begin{equation}\label{eqn3}
\sum_{w \in W_{\bar{a}}} {\bf x}_w = \sum_{\sigma \in L(P_{\bar{a}}, e)} F_{n, n-\Des(\rho_{\bar{a}}\sigma)}.
\end{equation} 

Note that if $\sigma \in L(P_{\bar{a}},e)$ and $\bar a \in AO^>_n$  then $\sigma^{-1}(1) > \sigma^{-1}(n)$.  Conversely, every permutation $\sigma \in \mathfrak{S}_n$ with $\sigma^{-1}(1)>\sigma^{-1}(n)$ is a linear extension in $L(P_{\bar{a}},e)$ for a unique $\bar a \in AO^>_n$. Let $\bar{a}(\sigma)$ denote the unique acyclic orientation of associated with $\sigma.$  
Now combining this with (\ref{eqn1}) and (\ref{eqn3}) yields,
\begin{equation*}
W_n^<({\bf x},t) = \sum_{\substack{\sigma \in \mathfrak{S}_n \\ \sigma^{-1}(1)>\sigma^{-1}(n)}}t^{\asc(\bar{a}(\sigma))} F_{n,n-\Des(\rho_{\bar{a}(\sigma)}\sigma)},
\end{equation*}
where recall $\rho_{\bar{a}(\sigma)}$ is a decreasing labeling of $P_{\bar a(\sigma)}$.
Note that $\asc(\bar a(\sigma)) = \des((\sigma^R)^{-1})$, where $\sigma^R$ is the reverse of $\sigma$.
Hence
\begin{equation}\label{eqn4}
W_n^<(\x,t) = \sum_{\substack{\sigma \in \mathfrak{S}_n \\ \sigma^{-1}(1)>\sigma^{-1}(n)}}
 t^{\des((\sigma^R)^{-1})} F_{n,n-\Des(\rho_{\bar{a}(\sigma)}\sigma)}.
\end{equation}

{\bf Part 2}:  As in the proof of \cite[Theorem 3.1]{sw4}, our next step is to construct a particular decreasing labeling $\tilde\rho_{\bar a(\sigma)}$ of $P_{\bar a(\sigma)}$ for each $\sigma \in L(P_{\bar{a}},e)$.  However since $C_n$ is not the incomparability graph of a poset, the construction used in the proof of \cite[Theorem 3.1]{sw4} does not work in this case.  The construction used here is also quite different from that of \cite[Theorem 3.1]{e1}. Let $p$ be the ``smallest" maximal
element 
 of $P_{\bar a(\sigma)}$
(that is, $p$ is maximal in the  poset $P_{\bar a(\sigma)}$ and is less than all the other maximal elements in the natural order on $ [n]$) and let
$\tilde\rho_{\bar a(\sigma)} (p) = 1$. Now remove $p$ from the poset and let $q$ be the smallest maximal
element of the remaining poset and let $\tilde\rho_{\bar a(\sigma)} (q) = 2$.
Continue this process inductively.  It is clear that $\tilde\rho_{\bar a(\sigma)}$ is a decreasing labeling of $P_{\bar a(\sigma)}$.

{\em Claim.} If $x$ and $y$ are incomparable in $P:=P_{\bar a(\sigma)}$, then $x < y$ implies
$\tilde\rho_{\bar a(\sigma)} (x)< \tilde\rho_{\bar a(\sigma)} (y)$.  

\noindent {\em Proof of Claim.} One can see this by drawing the Hasse diagram of $P$ minus the edge $(n,1)$ as a zig-zag path  on $[n]$ with the elements of $[n]$ increasing as one moves from left to right.  The path consists of up-segments and down-segments.  An up-segment is a maximal chain of $P$ of the form
 $a<_P a+1 <_P \dots <_P a+j$, where $j \ge 1$,  and a down-segment is a maximal chain with top and bottom removed unless it's 1 or n, of the form $a>_P a+1 >_P \dots >_P a+j$, where $j \ge 0$.  Between any two down-segments there is an up-segment. Let $\alpha_i$ be the $i$th segment from the left for each $i$.    One can see that under the labeling 
$\tilde\rho_{\bar a(\sigma)} $, the segment $\alpha_1$ gets the smallest labels,   the segment $\alpha_2$ gets the next smallest labels, and so on.  Now if $x$ and $y$ are incomparable, they are in different segments $\alpha_i$ and $\alpha_j$.  Clearly if $x < y$ then $i <j$, which implies that $x$ gets a smaller label then $y$.  Hence, the claim holds.

Now we show that  
\bq \label {desasceq} \Des(\tilde \rho_{\bar a(\sigma)}\sigma) = [n-1] \setminus \Asc_{\ge 2}(\sigma),\eq 
for all $\sigma \in \sg_n$.  If $i \in \Des(\tilde \rho_{\bar a(\sigma)}\sigma) $ then $\tilde \rho_{\bar a(\sigma)}\sigma(i) > 
\tilde \rho_{\bar a(\sigma)}\sigma(i+1) $. It thus follows from the claim that if $\sigma(i)$ and $\sigma(i+1)$ are incomparable in $P_{\bar a(\sigma)}$ then $\sigma(i )> \sigma(i+1)$, which implies $i \notin \Asc_{\ge 2}(\sigma)$.  On the other hand if $\sigma(i)$ and $\sigma(i+1)$ are comparable in $P_{\bar a(\sigma)}$ then $\sigma(i+1)$ covers $\sigma(i)$ since $\sigma \in  L(P_{\bar a(\sigma)}, e)$.  This implies that either $\sigma(i+1) = \sigma(i)+1$ or $\sigma(i+1) = \sigma(i)-1$.  In either case, $i \notin \Asc_{\ge 2}(\sigma)$.  Thus 
$$\Des(\tilde \rho_{\bar a(\sigma)}\sigma) \subseteq [n-1] \setminus \Asc_{\ge 2}(\sigma).$$

Conversely, if $i \notin   \Des(\tilde \rho_{\bar a(\sigma)}\sigma) $ then $\tilde \rho_{\bar a(\sigma)}\sigma(i) < 
\tilde \rho_{\bar a(\sigma)}\sigma(i+1) $.  It thus follows from the claim that if $\sigma(i)$ and $\sigma(i+1)$ are incomparable in $P_{\bar a(\sigma)}$ then $\sigma(i )< \sigma(i+1)$.  Since $j$ and $j+1$ are comparable in $P_{\bar a(\sigma)}$ for all $j \in [n-1]$, we have $\sigma(i+1) - \sigma(i) \ge 2$. Thus $i \in \Asc_{\ge 2}(\sigma)$. On the other hand if $\sigma(i)$ and $\sigma(i+1)$ are comparable in $P_{\bar a(\sigma)}$ then $\sigma(i)<_{P_{\bar a(\sigma)}}\sigma(i+1)$ since $\sigma \in  L(P_{\bar a(\sigma)}, e)$. But since $\rho$ is a decreasing labeling $\tilde \rho_{\bar a(\sigma)}\sigma(i) > 
\tilde \rho_{\bar a(\sigma)}\sigma(i+1) $, which contradicts our assumption that $i \notin   \Des(\tilde \rho_{\bar a(\sigma)}\sigma) $.  Hence this case is impossible.
We have shown $$\Des(\tilde \rho_{\bar a(\sigma)}\sigma) \supseteq [n-1] \setminus \Asc_{\ge 2}(\sigma),$$
which completes the proof of~(\ref{desasceq}).

Let 
$\omega$ be the involution on the ring of quasisymmetric functions determined by  $\omega F_{n,S} := F_{n,[n-1] \setminus S}$.  Since $\omega$ takes $h_n = F_{n,\emptyset}$ to $e_n= F_{n,[n-1]}$, the involution $\omega$ restricts to the usual involution on the ring of symmetric functions.   
Hence by (\ref{desasceq}), equation~(\ref{eqn4}) becomes
\begin{eqnarray*}
\omega W_n^<({\bf x},t) &=& 
\sum_{\substack{\sigma \in \mathfrak{S}_n \\ \sigma^{-1}(1)>\sigma^{-1}(n)}}t^{\des((\sigma^R)^{-1})} F_{n,n-\Asc_{\ge 2}(\sigma)} 
\\ &=& \sum_{\substack{\sigma \in \mathfrak{S}_n \\ \sigma^{-1}(1)<\sigma^{-1}(n)}} t^{\des(\sigma^{-1})} F_{n,\Des_{\ge 2}(\sigma)} .
\end{eqnarray*}
\end{proof}

\remark{There is an alternative proof of Theorem~\ref{Fth} involving standardization, which will be discussed in a forthcoming paper.}

\begin{proof}[Proof of (\ref{Fgreateq})]  A similar proof can be given here.  One can also use (\ref{Flesseq}) to prove this.  Indeed, by the involution on $W_n$ which reverses Smirnov words, we obtain $$W_n^>(\x,t) = t^{n-1} W_n^<(\x,t^{-1}).$$  By the involution on $\sg_n$, which reverses permutations,
$$ \sum_{\substack{\sigma \in \mathfrak{S}_n \\ \sigma(1)>\sigma(n)}}t^{\des(\sigma)} F_{n,\Asc_{\ge 2}(\sigma^{-1})}  =  \sum_{\substack{\sigma \in \mathfrak{S}_n \\ \sigma(1)<\sigma(n)}} t^{n-1-\des(\sigma)} F_{n, \Des_{\geq 2}(\sigma^{-1})}.$$
The result now follows from (\ref{Flesseq}).
\end{proof}

By combining (\ref{Feq}), (\ref{Flesseq}), and (\ref{Fgreateq}), one gets fundamental quasisymmetric function expansions of the other Smirnov word enumerators $W_n^{=}(\x,t)$, $W_n^{\ne}(\x,t)$, $\tilde W_n(\x,t)$, $\tilde W_n^{\ne}(\x,t)$ and of the chromatic quasisymmetric function $X_{C_n}(\x,t)$.
  The resulting expansion for $\tilde W_n(\x,t)$ has a particularly nice form.
\begin{cor} For all $n \ge 1$,
\bq \label{Fcirceq}\omega \tilde W_n(\x,t) = \sum_{\sigma \in \sg_n} t^{\cdes(\sigma)} F_{n,\Des_{\ge 2}
(\sigma^{-1})}(\x) .\eq
\end{cor}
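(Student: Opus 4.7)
The plan is to combine the already-established expansion (\ref{Feq}) for $\omega W_n(\x,t)$ with the expansion (\ref{Flesseq}) for $\omega W_n^<(\x,t)$ via a short algebraic identity, so no new P-partition work is required. Subtracting (\ref{refineeq1}) from (\ref{refineeq2}) gives
$$\tilde W_n(\x,t) - W_n(\x,t) = (t-1)\, W_n^<(\x,t),$$
and applying $\omega$ expresses $\omega\tilde W_n(\x,t)$ as the sum of two quasisymmetric expansions already in hand.

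Next I would substitute (\ref{Feq}) and (\ref{Flesseq}) to write
$$\omega \tilde W_n(\x,t) = \sum_{\sigma \in \sg_n} t^{\des(\sigma)} F_{n,\Des_{\ge 2}(\sigma^{-1})}(\x) + (t-1)\!\!\sum_{\substack{\sigma \in \sg_n \\ \sigma(1)<\sigma(n)}}\!\! t^{\des(\sigma)} F_{n,\Des_{\ge 2}(\sigma^{-1})}(\x),$$
and then split the first sum according to whether $\sigma(1)<\sigma(n)$ or $\sigma(1)>\sigma(n)$ (the only possibilities for a permutation). Combining with the second sum, each $\sigma$ with $\sigma(1)<\sigma(n)$ picks up the factor $t\cdot t^{\des(\sigma)} = t^{\des(\sigma)+1}$, while each $\sigma$ with $\sigma(1)>\sigma(n)$ retains $t^{\des(\sigma)}$.

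Finally, I would invoke the definition (\ref{cirdesdef}): since $\sigma_{n+1}=\sigma(1)$, we have $\cdes(\sigma)=\des(\sigma)+1$ exactly when $\sigma(n)>\sigma(1)$, and $\cdes(\sigma)=\des(\sigma)$ otherwise. This matches the two cases in the preceding step exactly, so the two sums fuse into the single sum $\sum_{\sigma \in \sg_n} t^{\cdes(\sigma)} F_{n,\Des_{\ge 2}(\sigma^{-1})}(\x)$, as desired. There is no real obstacle beyond this bookkeeping: every ingredient is an earlier identity in this section, and the whole argument amounts to tracking how the extra ``wrap-around'' comparison between $\sigma(1)$ and $\sigma(n)$ in the cyclic descent statistic is accounted for by the $(t-1)W_n^<$ correction term.
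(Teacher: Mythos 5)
Your proposal is correct and is essentially the paper's own proof: both use $\tilde W_n(\x,t) = W_n(\x,t) + (t-1)W_n^<(\x,t)$ (equivalently $tW_n^< + (W_n - W_n^<)$), substitute the expansions (\ref{Feq}) and (\ref{Flesseq}), and observe that the wrap-around comparison makes $\cdes(\sigma)=\des(\sigma)+1$ when $\sigma(1)<\sigma(n)$ and $\cdes(\sigma)=\des(\sigma)$ otherwise. No substantive difference from the paper's argument.
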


\begin{proof}We use the fact that $\tilde W_n(\x,t) = t W_n^<(\x,t) + (W_n(\x,t) - W_n^<(\x,t))$.
By (\ref{Feq}) and (\ref{Flesseq}), $$\omega W_n(\x,t) - \omega W_n^<(\x,t) = \sum_{\substack{\sigma \in \sg_n \\ \sigma(1) > \sigma(n) }}  t^{\des(\sigma)} F_{n, \Des_{\geq 2}(\sigma^{-1})}.$$
It follows from this and (\ref{Flesseq}) that
\beq \omega \tilde W_n(\x,t)  &=& \sum_{\substack{\sigma \in \sg_n \\ \sigma(1) < \sigma(n) }}  t^{\des(\sigma)+1} F_{n, \Des_{\geq 2}(\sigma^{-1})} + \sum_{\substack{\sigma \in \sg_n \\ \sigma(1) > \sigma(n) }}  t^{\des(\sigma)} F_{n, \Des_{\geq 2}(\sigma^{-1})} \\
&=& \sum_{\sigma \in \sg_n} t^{\cdes(\sigma)} F_{n, \Des_{\geq 2}(\sigma^{-1})}.\eeq
\end{proof}

There are various ways to specialize expansions in the fundamental quasisymmetric functions to obtain enumerative results.  One way is by  setting $x_i=1$ if $i \in [m]$ and $x_i=0$ otherwise, in a formal power series $f(\x)$.  We denote this specialization by $f(1^m)$. (Another way is discussed in the next section.)   It is not difficult to show that  (see \cite[Section 7.19]{st3}), 
$$F_{n,S}(1^m) = \binom{m+n-1 - |S|}{n},$$ for all $S \subseteq [n-1]$.
It is clear that 
$$W_n(1^m,t) =  \sum_{w \in W_n \cap [m]^n} t^{\des(w)}.$$  Hence by (\ref{Feq})
and the fact that $\omega F_{n,S} = F_{n, [n-1] \setminus S}$,
\bq \label{firstspec1} \sum_{w \in W_n \cap [m]^n} t^{\des(w)} = \sum_{\sigma \in \sg_n } t^{\des(\sigma)} \binom{m + |\Des_{\ge 2}(\sigma^{-1})|}{n},\eq  for all $m,n \in \PP$.
Analogous formulas can be obtained by applying the same specialization to the expansions (\ref{Flesseq}), (\ref{Fgreateq}), and (\ref{Fcirceq}).   The expansions (\ref{Flesseq}) and (\ref{Fcirceq}) yield the following result.
\begin{cor} For all $m,n \ge 1$,
\bq \label{firstspec2} \sum_{\substack{w \in W_n \cap [m]^n \\ w_1 <w_n}} t^{\des(w)} = \sum_{\substack{\sigma \in \sg_n \\ \sigma(1) <\sigma(n)} } t^{\des(\sigma)} \binom{m + |\Des_{\ge 2}(\sigma^{-1})|}{n} \eq 
and 
\bq \label{firstspec3}\sum_{w \in W_n \cap [m]^n} t^{\cdes(w)} = \sum_{\sigma \in \sg_n } t^{\cdes(\sigma)} \binom{m + |\Des_{\ge 2}(\sigma^{-1})|}{n}.\eq 
\end{cor}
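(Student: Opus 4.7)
The plan is to mimic exactly the derivation of equation~(\ref{firstspec1}) above, but starting from the fundamental quasisymmetric function expansions~(\ref{Flesseq}) and~(\ref{Fcirceq}) instead of~(\ref{Feq}). The two identities in the corollary have parallel proofs, so I would handle them in turn.

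For~(\ref{firstspec2}), I would first note that the left-hand side is exactly the specialization $W_n^<(1^m,t)$. To evaluate this from the symmetric function side, I would apply the involution $\omega$ to both sides of~(\ref{Flesseq}). Using the identity $\omega F_{n,S} = F_{n,[n-1]\setminus S}$ recorded in the proof of Theorem~\ref{Fth}, this gives
\begin{equation*}
W_n^<(\x,t) = \sum_{\substack{\sigma \in \sg_n \\ \sigma(1)<\sigma(n)}} t^{\des(\sigma)} F_{n,\,[n-1]\setminus \Des_{\ge 2}(\sigma^{-1})}(\x).
\end{equation*}
Specializing $x_i = 1$ for $i \in [m]$ and $x_i = 0$ otherwise, and invoking $F_{n,S}(1^m) = \binom{m+n-1-|S|}{n}$, the complement in $[n-1]$ in the subscript reduces the binomial to $\binom{m+|\Des_{\ge 2}(\sigma^{-1})|}{n}$, which is exactly the right-hand side of~(\ref{firstspec2}).

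For~(\ref{firstspec3}), the argument is identical with $W_n^<$ replaced by $\tilde W_n$, $\des$ replaced by $\cdes$, and the restriction $\sigma(1)<\sigma(n)$ dropped. That is, starting from~(\ref{Fcirceq}), apply $\omega$ termwise using $\omega F_{n,S} = F_{n,[n-1]\setminus S}$, then specialize at $\x = 1^m$; the left-hand side becomes $\tilde W_n(1^m,t) = \sum_{w \in W_n \cap [m]^n} t^{\cdes(w)}$, and the binomial collapses in the same way.

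There is no real obstacle here: both identities are straightforward specializations, and the only bookkeeping step is the complement identity $F_{n,S}(1^m) = \binom{m+n-1-|S|}{n}$, which I would simply cite from \cite[Section~7.19]{st3} as was done for~(\ref{firstspec1}). The only mild subtlety worth spelling out is that $\omega$ is being used as an involution on quasisymmetric functions (determined by $\omega F_{n,S} := F_{n,[n-1]\setminus S}$), not just on symmetric functions; this is exactly the viewpoint already adopted at the end of the proof of Theorem~\ref{Fth}, so no additional justification is required.
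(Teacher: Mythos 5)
Your proposal is correct and follows exactly the paper's intended argument: specialize the expansions (\ref{Flesseq}) and (\ref{Fcirceq}) at $x_i=1$ for $i\in[m]$ (and $0$ otherwise), using $\omega F_{n,S}=F_{n,[n-1]\setminus S}$ and $F_{n,S}(1^m)=\binom{m+n-1-|S|}{n}$, just as was done for (\ref{firstspec1}). Nothing is missing.
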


\begin{remark} In  \cite{lr1,lr2}, LoBue Tiefenbruck and Remmel study the distribution of  a pair of interesting statistics  on Smirnov words in $[m]^n$ different from $\des$ and $\cdes$.  They use the fact that their statistics are  preserved by a contraction map from unconstrained words to Smirnov words  to transfer their results from Smirnov words to unconstrained words.  Since $\des$ and $\cdes$  are also preserved by the contraction map,  we can also transfer our results to unconstrained words.  \end{remark}

\begin{remark} \label{newproof} We now describe a proof of (\ref{goodtcarleq}) that is different from the proof in \cite{sw1} discussed in the introduction.  Theorem~3.1 of \cite{sw4} gives a fundamental quasisymmetric function expansion of the chromatic quasisymmetric function $X_G(\x,t)$ when $G$  is an incomparability graph. (This reduces to (\ref{Feq}) when $G$ is the path $P_n$.)  In \cite{a}, Athanasiadis proves that  the fundamental quasisymmetric function expansion implies  the conjectured formula (7.14) of \cite{sw4}, which gives a power sum symmetric function expansion of $X_G(\x,t)$ when $G$ is a natural unit interval graph.  It is shown in \cite[Proof of Proposition 7.9]{sw4}  that when $G=P_n$, the power sum symmetric function expansion reduces to (7.15) of \cite{sw4},
which is 
$$\omega X_{P_n}(\x,t) =  \sum_{\lambda \vdash n} \left (A_{l(\lambda)}(t) \prod_{i=1}^{l(\lambda)} [\lambda_i]_t \right )\frac{p_\lambda(\x)}{z_\lambda}.$$  Hence since $W_n(\x,t)= X_{P_n}(\x,t)$, Stembridge's formula (\ref{stemeq})  implies (\ref{goodtcarleq2}), which is equivalent to (\ref{goodtcarleq}).
\end{remark}

\section{Variations of $q$-Eulerian polynomials} \label{qeulersec}

Recall that the Eulerian polynomials $A_n(t)$ have two well-known combinatorial interpretations, which are given by
$$A_n(t):= \sum_{\sigma \in \sg_n} t^{\des(\sigma)} = \sum_{\sigma \in \sg_n} t^{\exc(\sigma)}$$  and that Euler's exponential generating function for the Eulerian polynomials is given by
$$1+ \sum_{n\ge 1} A_n(t) \frac{z^n}{n!} = \frac{(1-t)e^z}{e^{tz} - te^z}.$$

In \cite{sw1} and \cite{sw4}, Shareshian and the second named author obtained combinatorial interpretations of the $q$-Eulerian polynomials $A_n(q,t)$ that satisfy the $q$-exponential generating function formula
\begin{eqnarray} \label{qEulereq}  1+ \sum_{n\ge 1} A_n(q,t) \frac{z^n}{[n]_q!} &=& 
 \frac{(1-t) \exp_q(z)} {\exp_q(tz) - t \exp_q(z)} 
 \\ \nonumber &=& 1+ \frac{(1-t)\sum_{i\ge 2} [i]_t \frac{z^i}{[i]_q!}}
{\exp_q(tz) - t\exp_q(z)}.\end{eqnarray} 
 The interpretation in \cite{sw1} is given by
\bq  \label{interpeq1} A^{\maj,\exc}_n(q,t) =  \sum_{\sigma \in \sg_n} q^{\maj(\sigma)-\exc(\sigma)} t^{\exc(\sigma)} \eq and the interpretation in \cite{sw4} is given by
\bq \label{interpeq2} A_n(q,t) = \sum_{\sigma \in \sg_n} q^{\maj_{\ge 2}(\sigma^{-1})} t^{\des(\sigma)} \eq
where 
 $$\maj(\sigma) = \!\!\!  \sum_{\substack{ i \in [n-1]  \\ \sigma(i+1) > \sigma(i)}} \!\!\!\! i \,\,\,\,\, \mbox{ and } \,\, \,\,\,\maj_{\ge 2}(\sigma) := \!\!\! \sum_{\substack{ i \in [n-1]  \\ \sigma(i+1) - \sigma(i) \ge 2}} \!\!\!\!i.$$ 
  Both $q$-analogs of $A_n(t)$ were obtained by expanding $\omega W_n(\x,t)$ in the fundamental quasisymmetric functions and then taking the stable principal specialization.   A formulation of the expansion obtained in \cite{sw1} yields (\ref{interpeq1}), while the formulation  (\ref{Feq}) obtained in \cite{sw4} yields (\ref{interpeq2}); see \cite[Proof of Theorem 9.7]{sw4} .  From this it follows that the two $q$-analogs are equal.   (A subsequent bijective proof was obtained in \cite{b}.)

  In this section, we use results of the previous sections to obtain  analogs of (\ref{qEulereq})  for  variations of the  interpretation of $A_n(q,t)$ given by (\ref{interpeq2}). 
  The variations are defined by
$$A^<_n(q,t) := \sum_{\substack{\sigma \in \sg_n \\ \sigma(1) < \sigma(n)}} q^{\maj_{\ge 2}(\sigma^{-1})} t^{\des(\sigma)} $$
and
$$\tilde A_n(q,t) := \sum_{\sigma \in \sg_n } q^{\maj_{\ge 2}(\sigma^{-1})} t^{\cdes(\sigma)} .$$
We also obtain nice  formulas for $A^<_n(q,t)$ and $\tilde A_n(q,t)$ evaluated at $n$th roots of unity.

The {\it stable principal specialization} $\sps(G(\x))$ of a quasisymmetric function  $G(\x)$ is obtained from $G(\x)$ by substituting $q^{i-1}$ for $x_i$ for all $i \ge 1$. By \cite[Lemma 5.2]{gr}, 
$$\sps(F_{n,S}(x)) = \frac{\sum_{i \in S} q^i} {(1-q)(1-q^2) \cdots (1-q^n)} $$
for all  $S\subseteq [n-1]$.
Hence by (\ref{Feq}), (\ref{Flesseq}), and (\ref{Fcirceq}), respectively,
\bq \label{psWeq1} \sps(\omega W_n(\x,t))  = \frac{A_n(q,t)}{(1-q)(1-q^2) \cdots (1-q^n)} \eq
\bq \label{psWeq2} \sps(\omega W^<_n(\x,t))  = \frac{A^<_n(q,t)}{(1-q)(1-q^2) \cdots (1-q^n)} \eq
\bq \label{psWeq3} \sps(\omega \tilde W_n(\x,t))  = \frac{\tilde A_n(q,t)}{(1-q)(1-q^2) \cdots (1-q^n)} .\eq

In \cite{sw1,sw4}, first $\omega$ is applied to both sides of (\ref{goodtcarleq2}), then the stable principal specialization is taken using (\ref{psWeq1}), and  finally  $z$ is replaced by $(1-q)z$ resulting in 
(\ref{qEulereq}).  By doing the same to (\ref{lessWeeq}) and (\ref{circWeeq2}), using (\ref{psWeq2}) and (\ref{psWeq3}), respectively, we obtain the following result.

\begin{thm} We have
\bq \label {explesseq} \sum_{n\ge 1} A^<_n(q,t) \frac{z^n}{[n]_q!} = \frac{(1-t) \frac{\partial}{\partial t}\sum_{i\ge 2}  [i]_t\frac{z^i}{[i]_q!}}{\exp_q(tz) - t\exp_q(z)}
\eq

\bq \label {expcirceq} \sum_{n\ge 1} \tilde A_n(q,t) \frac{z^n}{[n]_q!}= \frac{(1-t)\frac{\partial }{\partial t}\exp_q(tz)}
{\exp_q(tz) - t\exp_q(z)}.
\eq
\end{thm}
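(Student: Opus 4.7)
The plan is to specialize the symmetric-function identities (\ref{lessWeeq}) and (\ref{circWeeq2}) down to the $q$-exponential generating-function level, exactly as the authors describe for the derivation of (\ref{qEulereq}) from (\ref{goodtcarleq2}) in the paragraph just before the theorem statement. The recipe has three steps: apply $\omega$, take the stable principal specialization $\sps$, and finally make the substitution $z\mapsto(1-q)z$. I would carry this out in parallel for (\ref{lessWeeq}) to obtain (\ref{explesseq}) and for (\ref{circWeeq2}) to obtain (\ref{expcirceq}).

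First, applying $\omega$ sends each $e_i(\x)$ to $h_i(\x)$ and $E$ to $H$ while commuting with $\frac{\partial}{\partial t}$, since $\omega$ acts only on the symmetric-function variables. Next, $\sps$ is an algebra homomorphism that also commutes with $\frac{\partial}{\partial t}$ and satisfies $\sps(h_n)=1/(q;q)_n$, where $(q;q)_n := (1-q)(1-q^2)\cdots(1-q^n)$; I would apply it term-by-term. Finally, the substitution $z\mapsto(1-q)z$ is driven by the single identity
\begin{equation*} \sps(h_n)\cdot((1-q)z)^n = \frac{z^n}{[n]_q!}, \end{equation*}
which turns $\sps(H((1-q)z))$ into $\exp_q(z)$ and $\sps(H(t(1-q)z))$ into $\exp_q(tz)$, uniformly handling the common denominator $H(tz)-tH(z)$ in both identities.

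For the left-hand sides, I would invoke (\ref{psWeq2}) and (\ref{psWeq3}); after the substitution $z\mapsto(1-q)z$ these become exactly $\sum_{n\ge 1} A_n^<(q,t)\,z^n/[n]_q!$ and $\sum_{n\ge 1} \tilde A_n(q,t)\,z^n/[n]_q!$, because the $(1-q)^n$ factors absorbed by the substitution combine with $(q;q)_n$ to produce $[n]_q!$. For the numerators on the right-hand sides, the same substitution turns $(1-t)\frac{\partial}{\partial t}\sum_{i\ge 2}[i]_t\, h_i(\x)\, z^i$ into $(1-t)\frac{\partial}{\partial t}\sum_{i\ge 2}[i]_t\, z^i/[i]_q!$ and turns $(1-t)\frac{\partial}{\partial t} H(tz)$ into $(1-t)\frac{\partial}{\partial t}\exp_q(tz)$. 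Assembling the two sides yields both (\ref{explesseq}) and (\ref{expcirceq}).

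The argument is essentially mechanical and I do not expect any serious obstacle, since the real content is already in (\ref{lessWeeq}), (\ref{circWeeq2}), (\ref{psWeq2}), and (\ref{psWeq3}). The one point requiring care is that $\frac{\partial}{\partial t}$ commutes with $\omega$, with $\sps$, and with the $z$-substitution—all three are $t$-free operations on the relevant power-series ring—so the derivative can be pulled through each stage freely; once that and the $(1-q)^n$ bookkeeping are verified, both identities fall out simultaneously.
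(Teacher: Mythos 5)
Your proposal is correct and is essentially identical to the paper's own derivation: the paper proves the theorem precisely by applying $\omega$ to (\ref{lessWeeq}) and (\ref{circWeeq2}), taking the stable principal specialization via (\ref{psWeq2}) and (\ref{psWeq3}), and then substituting $z\mapsto(1-q)z$, just as you describe. Your added bookkeeping (that $\sps$ and the $z$-substitution commute with $\frac{\partial}{\partial t}$ and that the $(1-q)^n$ factors convert $(1-q)(1-q^2)\cdots(1-q^n)$ into $[n]_q!$) is exactly the routine verification the paper leaves implicit.
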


In \cite[Corollary 6.2]{ssw},  Sagan, Shareshian and the second named author show that for every $n$th root of unity  $\xi$, the coefficients of  the polynomial
$A_n(\xi,t)$ are positive integers.  More precisely, they show that 
if $k | n$ and $\xi_k$ is any primitive $k$th root of unity   then
\bq \label{unityeq} A_n(\xi_k,t) = A_{\frac{n} {k}}(t) \,\, [k]^{\frac{n} {k}}_t .\eq 
Consequently, $A_n(\xi_k,t)$ is a  palindromic, unimodal polynomial in $\N[t]$.  Here we prove analogous results for  other Smirnov word enumerators.

A key tool in the proof of (\ref{unityeq})  is the following result, which is  implicit in \cite{d} and  stated explicitly in \cite{ssw}.

\begin{lemma}[see {\cite[Proposition 3.1]{ssw}}]  \label{thdes} Let $R$ be a commutative ring. Suppose $u(q) \in R[q]$ and there exists  a homogeneous symmetric function $U(\x)$ of degree $n$ with coefficients in $R$ such that   
 $$u(q) =(1-q)(1-q^2)\dots(1-q^n)\,\, \sps (U(\x)).$$ If $k|n$  then $u(\xi_k)$ is the coefficient of $z_{(k^{\frac n k})}^{-1} p_{(k^{\frac n k})}$ in the expansion of $U(\x)$ in the power sum basis.  
\end{lemma}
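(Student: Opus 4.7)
The plan is to expand $U(\x)$ in the power sum basis, apply the stable principal specialization term-by-term, multiply through by $(1-q)(1-q^2)\cdots(1-q^n)$, and then identify which summands are nonzero at $q=\xi_k$.

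Write $U(\x) = \sum_{\lambda \vdash n} (c_\lambda/z_\lambda)\,p_\lambda(\x)$ for some $c_\lambda \in R$. Since $\sps(p_r(\x))=\sum_{i\ge 0}q^{ri}=1/(1-q^r)$, we get
$$u(q) = \sum_{\lambda \vdash n} \frac{c_\lambda}{z_\lambda}\cdot \frac{(1-q)(1-q^2)\cdots(1-q^n)}{\prod_{i=1}^{l(\lambda)} (1-q^{\lambda_i})}.$$
Each summand is a polynomial in $q$ (using $1-q^a=(1-q)[a]_q$ one checks it equals $(1-q)^{n-l(\lambda)}[n]_q!/\prod_i [\lambda_i]_q$, which is a polynomial), so evaluation at $q=\xi_k$ is well defined term by term.

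Next I would show that only the term with $\lambda=(k^{n/k})$ survives the substitution. The numerator $(1-q)(1-q^2)\cdots(1-q^n)$ has a zero of order exactly $n/k$ at $\xi_k$, one factor for each multiple of $k$ in $[n]$. The denominator $\prod_i(1-q^{\lambda_i})$ has a zero of order $m:=\#\{i:k\mid \lambda_i\}$. Since each part divisible by $k$ contributes at least $k$ to $\sum_i\lambda_i=n$, we have $mk\le n$, with equality if and only if every part of $\lambda$ equals $k$, i.e.~$\lambda=(k^{n/k})$. In every other case $m<n/k$, the numerator carries strictly more zeros than the denominator, and the summand vanishes at $\xi_k$.

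For the surviving partition $\lambda=(k^{n/k})$, split the numerator into the factors $(1-q^j)$ with $k\mid j$ and those with $k\nmid j$. The first group divided by $(1-q^k)^{n/k}$ is $\prod_{i=1}^{n/k}[i]_{q^k}$, which at $q=\xi_k$ becomes $(n/k)!$. The second group evaluates at $\xi_k$ to $k^{n/k}$ by applying the cyclotomic identity $\prod_{j=1}^{k-1}(1-\xi_k^j)=k$ to each of the $n/k$ complete blocks of nonzero residues in $[n]$. Since $z_{(k^{n/k})}=k^{n/k}(n/k)!$, these two factors cancel $z_\lambda$, yielding
$$u(\xi_k) = c_{(k^{n/k})},$$
which by construction is the coefficient of $z_{(k^{n/k})}^{-1}p_{(k^{n/k})}(\x)$ in the power sum expansion of $U(\x)$.

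The main (and essentially only) obstacle is the counting step that isolates $\lambda=(k^{n/k})$ as the unique partition whose denominator carries enough zeros at $\xi_k$; once that is in place, the residue computation reduces to the cyclotomic identity and the elementary evaluation $z_{(k^{n/k})}=k^{n/k}(n/k)!$.
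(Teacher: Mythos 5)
Your proof is correct. Note that the paper itself gives no proof of this lemma: it is imported verbatim as \cite[Proposition 3.1]{ssw} (and credited as implicit in D\'esarm\'enien \cite{d}), so there is nothing internal to compare against; your argument --- specializing $p_\lambda$ to $\prod_i(1-q^{\lambda_i})^{-1}$, checking each summand is a polynomial, comparing orders of vanishing at $\xi_k$ to isolate $\lambda=(k^{n/k})$, and evaluating the surviving term via $\prod_{r=1}^{k-1}(1-\xi_k^r)=k$ together with $z_{(k^{n/k})}=k^{n/k}(n/k)!$ --- is essentially the standard proof given in that reference. The only point worth flagging is bookkeeping: since $z_\lambda^{-1}$ is not available in an arbitrary commutative ring, the expansion and the termwise evaluation at $\xi_k$ should be read in $R\otimes\Q$ (extended by $\xi_k$), which is harmless here since the equality of the polynomial $u(q)$ with the sum of polynomial summands persists there, and in the paper's applications $R=\Q[t]$.
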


In \cite{ssw}, (\ref{unityeq}) is proved by setting $R=\Q[t]$ and $U(\x) = \omega W_n(\x,t)$ in  Lemma~\ref{thdes}.  By (\ref{psWeq1}), $u(q)=A_n(q,t)$.  Hence it follows from  Lemma~\ref{thdes} that $A_n(\xi_k,t)$ equals the coefficient of $z_{(k^{\frac n k})}^{-1} p_{(k^{\frac n k})}$ in the expansion of $\omega W_n(\x,t) $ in the power sum basis, which by (\ref{powereq}) equals $A_{\frac{n}{k} }(t)\, [k]_t^{\frac{n}{k}}$.
We use a similar argument to obtain the following result.  Indeed, to prove (\ref{unity1eq}) below, we set $U(\x) = \omega W^{<}_n(\x,t)$ and use (\ref{psWeq2}) and Theorem~\ref{powlesth}.   To prove (\ref{unity2eq}) below, we set $U(\x) =\omega \tilde W_n(\x,t)$ and use (\ref{psWeq3}) and Theorem~\ref{pcircth}.

\begin{thm} Let $n \ge 2$ and $k | n$. If $\xi_k$ is any primitive $k$th root of unity   then

\bq \label{unity1eq} A^{<}_n(\xi_k,t) = \frac{d }{dt} ( tA_{\frac{n}{k} -1}(t)\, [k]_t^{\frac{n}{k}}) 
\eq 
and
\bq \label{unity2eq} \tilde A_n(\xi_k,t) = n t^{k} A_{\frac{n}{k} -1}(t)\, [k]_t^{\frac{n}{k}-1} .
\eq
Consequently, $A^{<}_n(\xi_k,t), \tilde A_n(\xi_k,t) \in \N[t]$ and  $\tilde A_n(\xi_k,t)$ is  palindromic and unimodal.
\end{thm}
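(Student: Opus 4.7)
The strategy is to invoke Lemma~\ref{thdes} twice with $R = \Q[t]$. For~(\ref{unity1eq}), I take $U(\x) = \omega W_n^<(\x,t)$; the hypothesis of the lemma is met with $u(q) = A_n^<(q,t)$ by~(\ref{psWeq2}). For~(\ref{unity2eq}), I take $U(\x) = \omega \tilde W_n(\x,t)$; the hypothesis is met with $u(q) = \tilde A_n(q,t)$ by~(\ref{psWeq3}). In each case the conclusion of the lemma identifies $u(\xi_k)$ with the coefficient of $z_\mu^{-1} p_\mu(\x)$, where $\mu = (k^{n/k})$, in the power sum expansion of $U(\x)$.

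To extract these coefficients, I substitute $\lambda = (k^{n/k})$ (so $l(\lambda) = \frac{n}{k}$ and every $\lambda_i = k$) into the power sum formulas of Theorems~\ref{powlesth} and~\ref{pcircth}. Theorem~\ref{powlesth} immediately yields
\[
A_n^<(\xi_k,t) \;=\; \frac{d}{dt}\!\left( t\,A_{\frac{n}{k}-1}(t)\prod_{i=1}^{n/k}[k]_t \right) \;=\; \frac{d}{dt}\bigl(t\,A_{\frac{n}{k}-1}(t)\,[k]_t^{n/k}\bigr),
\]
which is~(\ref{unity1eq}). In the formula of Theorem~\ref{pcircth}, each of the $n/k$ terms of the inner sum equals $k\,t^{k}\,[k]_t^{n/k-1}$, so the sum collapses to $n t^{k}[k]_t^{n/k-1}$, giving
\[
\tilde A_n(\xi_k,t) \;=\; A_{\frac{n}{k}-1}(t)\cdot \tfrac{n}{k}\cdot k\,t^{k}\,[k]_t^{n/k-1} \;=\; n t^{k}\,A_{\frac{n}{k}-1}(t)\,[k]_t^{n/k-1},
\]
which is~(\ref{unity2eq}).

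The positivity and symmetry consequences are then read off directly. The polynomial $t\,A_{\frac{n}{k}-1}(t)\,[k]_t^{n/k}$ lies in $\N[t]$ (using the convention $A_0(t) = t^{-1}$, which makes $tA_0(t) = 1$), so its derivative does too, proving $A_n^<(\xi_k,t) \in \N[t]$. For~(\ref{unity2eq}), it is classical that $A_m(t)$ for $m \ge 1$ and each $[k]_t$ are palindromic and unimodal with nonnegative integer coefficients; by \cite[Proposition~B.1]{sw4}, products of such polynomials remain palindromic and unimodal, and multiplication by the monomial $nt^k$ preserves both properties. The boundary case $\frac{n}{k} = 1$ reduces to $\tilde A_n(\xi_n,t) = nt^{n-1}$, which trivially has both properties. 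No real obstacle is anticipated: the argument is a direct coefficient extraction from two previously established power-sum expansions, with only the conventional value of $A_0(t)$ warranting a brief comment in the extremal case $k = n$.
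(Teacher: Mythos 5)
Your proposal is correct and follows essentially the same route as the paper, which (in the paragraph preceding the theorem) applies Lemma~\ref{thdes} with $U(\x)=\omega W_n^{<}(\x,t)$ via (\ref{psWeq2}) and Theorem~\ref{powlesth}, and with $U(\x)=\omega \tilde W_n(\x,t)$ via (\ref{psWeq3}) and Theorem~\ref{pcircth}, then reads off the coefficient at $\lambda=(k^{n/k})$. Your coefficient extraction and the handling of the extremal case $k=n$ via $A_0(t)=t^{-1}$ are exactly as intended.
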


\begin{cor}For all $n \ge 2$,
\bq \label{unity4eq} A^{<}_n(1,t) = \frac{d }{dt} ( tA_{n -1}(t))
\eq
and
\bq \label{unity3eq} \tilde A_n(1,t) = n t A_{n-1}(t) .
\eq
\end{cor}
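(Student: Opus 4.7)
The plan is to deduce this corollary as the special case $k=1$ of the preceding theorem, in which case $\xi_1 = 1$ is the unique primitive first root of unity. Since $1 \mid n$ for every $n \geq 2$, the hypotheses of that theorem are vacuously satisfied with $k=1$.

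First I would substitute $k=1$ into (\ref{unity1eq}). On the left hand side we get $A^{<}_n(\xi_1, t) = A^{<}_n(1,t)$ since $\xi_1 = 1$. On the right hand side, $\frac{n}{k} = n$ and $[k]_t = [1]_t = 1$, so the expression $\frac{d}{dt}(tA_{\frac{n}{k}-1}(t)[k]_t^{n/k})$ collapses to $\frac{d}{dt}(tA_{n-1}(t))$. This immediately yields (\ref{unity4eq}).

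Next I would substitute $k=1$ into (\ref{unity2eq}) in the same way. The left hand side becomes $\tilde A_n(1,t)$, and the right hand side becomes $nt^1 A_{n-1}(t) \cdot 1^{n-1} = nt A_{n-1}(t)$, which is (\ref{unity3eq}).

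There is no real obstacle here beyond checking that the two formulas in the preceding theorem are indeed valid at $k=1$; one only needs to observe that the derivation of that theorem (via Lemma~\ref{thdes} applied to $U(\x) = \omega W^{<}_n(\x,t)$ and $U(\x) = \omega\tilde W_n(\x,t)$) does not exclude the case $k=1$, since Lemma~\ref{thdes} makes sense for any divisor $k$ of $n$, and the power sum expansions in Theorems~\ref{powlesth} and~\ref{pcircth} include the partition $\lambda = (1^n)$. Thus the corollary follows by direct specialization.
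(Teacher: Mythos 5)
Your proposal is correct and matches the paper's derivation: the corollary is stated with no separate argument precisely because it is the $k=1$ specialization of the preceding theorem, with $\xi_1=1$ and $[1]_t=1$, exactly as you say. (The paper afterwards also supplies elementary bijective proofs of both identities, but those are offered as an independent verification, not as the route to the corollary.)
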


Equations~(\ref{unity4eq}) and~({\ref{unity3eq}) have  elementary bijective proofs.  Indeed, 
for each $\sigma \in \sg_{n}$, such that $\sigma(n) = n$, let
$\mathcal C_\sigma$ be the set of circular rearrangements of $\sigma$. Clearly, $|\mathcal C_\sigma|= n$ and for each $\tau \in  \mathcal C_\sigma$, we have $\cdes(\tau) = \des(\sigma)+1$. 
Hence, 
\begin{eqnarray*} \tilde A_n(1,t) &=& \sum_{\substack {\sigma \in \sg_n \\ \sigma(n)= n}} \sum_{\tau \in \mathcal C_\sigma} t^{\cdes(\tau)}
\\ &=&  \sum_{\substack {\sigma \in \sg_n \\ \sigma(n)= n}} n t^{\des(\sigma)+1}
\\&=& n\sum_{\sigma \in \sg_{n-1}}  t^{\des(\sigma)+1}
\\ &=& ntA_{n-1}(t).
\end{eqnarray*}

Now for each $\sigma \in \sg_{n}$, such that $\sigma(n) = n$, let
$$\mathcal C^{<}_\sigma:=\{\tau \in \mathcal C_\sigma: \tau(1) < \tau(n)\} .$$ 
Clearly, $|\mathcal C^{<}_\sigma| = \des(\sigma) +1$
and for each $\tau \in  \mathcal C^{<}_\sigma$, we have $\des(\tau) = \des(\sigma)$.   Hence 
\begin{eqnarray*} A^{<}_n(1,t) &=& \sum_{\substack {\sigma \in \sg_n \\ \sigma(n)= n}} \sum_{\tau \in \mathcal C^{<}_\sigma} t^{\des(\tau)}
\\ &=&  \sum_{\substack {\sigma \in \sg_n \\ \sigma(n)= n}} (\des(\sigma)+1) t^{\des(\sigma)}
\\&=& \sum_{\sigma \in \sg_{n-1}} (\des(\sigma)+1) t^{\des(\sigma)}
\\ &=& \frac{d}{dt}(tA_{n-1}(t)).
\end{eqnarray*}

By combining (\ref{unityeq}) with (\ref{unity1eq}) and with (\ref{unity2eq}), we obtain the following generalization of the previous corollary.

\begin{cor} \label{unitycor} Let $n \ge 2$ and $k | n$. If $\xi_k$ is any primitive $k$th root of unity   then
$$A_n^{<}(\xi_k,t) = \frac{d}{dt}(t[k]_t A_{n-k}(\xi_k,t))$$
and
\bq \tilde A_n(\xi_k,t) = n t^k A_{n-k}(\xi_k,t) .\eq
\end{cor}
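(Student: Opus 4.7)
The plan is to deduce both identities as formal consequences of the three already-proved formulas (\ref{unityeq}), (\ref{unity1eq}), and (\ref{unity2eq}), by performing a single re-expression.  The key observation is that since $k \mid n$, we also have $k \mid (n-k)$, so when $n-k \ge 1$ the formula (\ref{unityeq}) applied with $n$ replaced by $n-k$ gives
$$A_{n-k}(\xi_k,t) \;=\; A_{\frac{n-k}{k}}(t)\,[k]_t^{\frac{n-k}{k}} \;=\; A_{\frac{n}{k}-1}(t)\,[k]_t^{\frac{n}{k}-1}.$$
This is precisely the combination of factors sitting on the right hand sides of (\ref{unity1eq}) and (\ref{unity2eq}), so the entire strategy is just to substitute it back in.

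For the cyclic identity, I would plug the above expression for $A_{n-k}(\xi_k,t)$ directly into the right hand side of (\ref{unity2eq}), immediately producing $\tilde A_n(\xi_k,t) = n t^k A_{n-k}(\xi_k,t)$.  For the ``less-than'' identity, I would first rewrite the argument of the derivative in (\ref{unity1eq}) as
$$t\,A_{\frac{n}{k}-1}(t)\,[k]_t^{\frac{n}{k}} \;=\; t[k]_t\cdot \bigl(A_{\frac{n}{k}-1}(t)\,[k]_t^{\frac{n}{k}-1}\bigr) \;=\; t[k]_t\,A_{n-k}(\xi_k,t),$$
so that (\ref{unity1eq}) reads $A^{<}_n(\xi_k,t) = \frac{d}{dt}\bigl(t[k]_t\,A_{n-k}(\xi_k,t)\bigr)$, as desired.

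The only case requiring separate attention is the boundary $k=n$, where $A_{n-k}(\xi_k,t) = A_0(t) := t^{-1}$ by the convention laid down at the start of Section~4.  Here I would simply verify both identities by direct substitution: $nt^n A_0(t) = n t^{n-1}$ agrees with (\ref{unity2eq}) specialized to $n/k = 1$, and $\frac{d}{dt}(t[n]_t A_0(t)) = \frac{d}{dt}([n]_t)$ agrees with the corresponding specialization of (\ref{unity1eq}).  I do not anticipate any real obstacle; the corollary is essentially a bookkeeping rearrangement made possible by the divisibility $k \mid (n-k)$, and the only mild care needed is in handling the degenerate value $A_0(t) = t^{-1}$.
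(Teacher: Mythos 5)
Your proposal is correct and matches the paper's argument, which likewise obtains the corollary by substituting (\ref{unityeq}) with $n$ replaced by $n-k$ into (\ref{unity1eq}) and (\ref{unity2eq}). Your explicit check of the boundary case $k=n$ via the convention $A_0(t)=t^{-1}$ is a reasonable extra precaution but not a departure from the paper's route.
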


\end{document}